\newtheorem{thm}{Theorem}[section]
\newtheorem{prop}[thm]{Proposition}
\newtheorem{lem}[thm]{Lemma}
\theoremstyle{definition}
\newtheorem{assumption}[thm]{Assumption}
\newtheorem{remark}[thm]{Remark}
\newcommand{\pr}{\mathbb{P}}
\newcommand{\Q}{\mathbb{Q}}
\newcommand{\R}{\mathbb{R}}
\newcommand{\E}{\mathbb{E}}
\newcommand{\supp}{\text{supp}}
\newcommand{\sP}{\mathcal{P}}
\newcommand{\sL}{\mathcal{L}}
\newcommand{\sA}{\mathcal{A}}
\newcommand{\bx}{\bar{x}}
\newcommand{\bz}{\bar{z}}
\newcommand{\bF}{\mathbb{F}}
\newcommand{\sF}{\mathcal{F}}
\newcommand{\linf}{L^{\infty}}
\newcommand{\sinf}{\mathcal{S}^{\infty}}
\newcommand{\calpha}{C^{\alpha}}
\newcommand{\lpee}{L^{p}}
\newcommand{\spee}{\mathcal{S}^p}
\newcommand{\ltwo}{L^2}
  \DeclarePairedDelimiter\norm{\lVert}{\rVert}  
\begin{document}

\title[Monotonicity condition for ergodic BSDEs]{A new monotonicity condition for ergodic BSDEs and ergodic control with super-quadratic Hamiltonians}
\author{Joe Jackson}
\address{Department of Mathematics, The University of Texas at Austin}
\email{jjackso1@utexas.edu}%

\author{Gechun Liang}
\address{Department of Statistics, University of Warwick}
\email{g.liang@warwick.ac.uk}

\thanks{During the preparation of this work the first author has been supported by the National Science Foundation under Grant No. DGE1610403 (2020-2023). Any opinions, findings and conclusions or recommendations expressed in this material are those of the author(s) and do not necessarily reflect the views of the National Science Foundation (NSF).
}





\begin{abstract}
    We establish the existence (and in an appropriate sense uniqueness) of Markovian solutions for ergodic BSDEs under a novel monotonicity condition. Our monotonicity condition allows us to prove existence even when the driver $f$ has arbitrary (in particular super-quadratic) growth in $z$, which reveals an interesting trade-off between monotonicity and growth for ergodic BSDEs. The technique of proof is to establish a probabilistic representation of the derivative of the Markovian solution, and then use this representation to obtain a-priori estimates. Our study is motivated by applications to ergodic control, and we use our existence result to prove the existence of optimal controls for a class of ergodic control problems with potentially super-quadratic Hamiltonians. We also treat a class of drivers coming from the construction of forward performance processes, and interpret our monotonicity condition in this setting. 
\end{abstract}

\maketitle

\section{Introduction} 

\subsection{Ergodic BSDEs}
In this paper, we study the ergodic backward stochastic differential equation (EBSDE)
\begin{align} \label{ebsdeintro}
    \begin{cases} dX_t = b(X_t) dt + dW_t, \\
    dY_t = -f(X_t,Z_t) dt + \lambda dt + Z_t \cdot dW_t. 
    \end{cases}
\end{align}
Here $W$ is a $d$-dimensional Brownian motion, and the data consists of a vector field $b : \R^d \to \R^d$ and a function $f : \R^d \times \R^d \to \R^d$ called the driver. We are interested in Markovian solutions, i.e. triples $(u,v,\lambda)$ where $u: \R^d \to \R$, $v : \R^d \to \R^d$, and $\lambda \in \R$, such that for each $x \in \R^d$ and $0 \leq t \leq T < \infty$, we have 
\begin{align*}
    Y_t^x = Y_T^x + \int_t^T f(X_s^x,Z_s^x) ds - \lambda(T-t) - \int_t^T Z_s^x \cdot dW_s, 
\end{align*}
where $X^x$ solves $X^x_t = x + \int_0^t b(X^x_s) ds + W_t$ and $(Y^x,Z^x) = (u(X^x),v(X^x))$. We recall that there is a natural connection between the EBSDE \eqref{ebsdeintro} and the partial differential equation (PDE) 
\begin{align} \label{pdeintro}
    \frac{1}{2} \Delta u + b \cdot \nabla u +  f(x,\nabla u) = \lambda \text{ in }\R^d, 
\end{align}
in the sense that if there is a pair $(u,\lambda)$ with $u : \R^d \to \R$ and $\lambda \in \R$ satisfying \eqref{pdeintro}, then It\^o's formula reveals that $(u,\nabla u, \lambda)$ is a Markovian solution to the EBSDE \eqref{ebsdeintro}. 

\subsection{Related literature}

\subsubsection{Probabilistic approaches}
Ergodic BSDEs were first introduced in \cite{Fuhrman2009ErgodicBA} in an infinite dimensional setting. Applied to the somewhat simpler equation \eqref{ebsdeintro}, the results of \cite{Fuhrman2009ErgodicBA} show existence and uniqueness of a Markovian solution when the drift $b$ is dissipative, i.e. 
\begin{align} \label{dissipativeintro}
    (b(x) - b(\bx)) \cdot (x - \bx) \leq - \delta |x - \bx|^2
\end{align}
for some $\delta > 0$, and the driver $f$ is Lipschitz. In \cite{Debussche2010ErgodicBU}, well-posedness is established under weaker conditions on $b$, but still with the Lipschitz condition on $f$. In \cite{Hu2015APA}, ergodic BSDEs are used to study the long-time behavior of parabolic PDEs, still in an infinite-dimensional framework. In the finite-dimensional setting, \cite{richou2008} and \cite{COSSO20161932} consider generalizations of the ergodic BSDE \eqref{ebsdeintro} involving reflection and jumps, respectively. In \cite{Liang2017RepresentationOH}, \cite{chonghuliang}, and \cite{huliangtang}  a connection is developed between EBSDEs and forward performance processes. In \cite{Liang2017RepresentationOH}, the state space is finite-dimensional, and the driver $f$ is of quadratic growth in $z$ and satisfies the regularity condition
\begin{align}
    |f(x,z) - f(\bx,z)| \leq C_x(1 + |z|)|x - \bx|.
\end{align}
In \cite{Liang2017RepresentationOH}, existence of a Markovian solution is established for \eqref{ebsdeintro} under the condition $\delta > C_x$. Similar ``sufficiently dissipative" conditions appear in \cite{huliangtang} and \cite{chonghuliang}. Finally, we mention that super-quadratic BSDEs have been studied on a finite horizon, most notable in \cite{delbaenhubao} where a number of results, both positive and negative were obtained.

\subsubsection{Analytical approaches}
The works listed above study the EBSDE \eqref{ebsdeintro} through probabilistic methods. There is also a large literature dealing directly with the associated PDE \eqref{pdeintro} through both probabilistic and analytical techniques. Here the focus is not only on well-posedness of the equation \eqref{pdeintro}, but also on its connection to the behavior of the parabolic problem
\begin{align}
    \begin{cases}
    \partial_t u + \frac{1}{2} \Delta u + b \cdot \nabla u + f(x, \nabla u) = 0 \text{ in }(0,T) \times \R^d, \\
    u(T,x) = g(x), \, \, x \in \R^d
    \end{cases}
\end{align}
as $T \to \infty$. The works \cite{Bensoussan1992}, \cite{Ichihara2013LargeTB}, \cite{Kaise2006ONTS}, and \cite{fujita}  all contain well-posedness results for the PDE \eqref{pdeintro} when the driver $f$ has quadratic growth in $z$. But typically the PDE approach requires strict convexity\footnote{More precisely, it is typical in the PDE literature to consider the equation $- \frac{1}{2} \Delta v - b \cdot \nabla v + H(x,Dv) = \gamma$ with $H = H(x,z)$ strictly convex in $z$. Make the transformation $u = -v$, $\lambda = - \gamma$, $f(x,z) = -H(x,-z)$ to see that this is equivalent to \eqref{pdeintro} with $f$ strictly convex in $z$.} of $f$ in $z$, see e.g. \cite{Ichihara2013LargeTB}. The recent work \cite{chasseigneichihara} contains some remarkable results on the qualitative behavior of the discounted analogue of \eqref{pdeintro} with super-quadratic Hamiltonian $\frac{1}{m} |\nabla u|^m$ for $m > 2$, but the focus there is on qualitative behavior rather than general well-posedness results.

\subsection{Our results}

\subsubsection{The main existence result}

The purpose of the present paper is to study the well-posedness of the ergodic BSDE \eqref{ebsdeintro} when $f$ is just locally Lipschitz, and in particular can have arbitrary growth in $z$. Our main result is Theorem \ref{thm:main}, which gives the existence (and in an appropriate sense uniqueness) of a Markovian solution to \eqref{ebsdeintro} when $f$ splits as $f = g + h$, and the data $(b, g, h)$ is such that 
\begin{itemize}
    \item $b$ is Lipschitz and dissipative, 
    \item $g = g(x,z)$ satisfies
    \begin{itemize}
        \item $\sup_{x \in \R^d} |g(x,0)| < \infty$,
        \item $g$ is Lipschitz (in both arguments) on $\R^d \times B_R$ for each $R > 0$
        \item the map $z \mapsto \partial_x g(x,z)$ is monotone, in the sense that $\big(\partial_x g(x,z) - \partial_x g(x, \bz)\big) \cdot (z - \bz) \leq 0$. 
    \end{itemize}
    \item $h = h(x,z)$ satisfies 
    \begin{itemize}
        \item $\sup_{x \in \R^d} |h(x,0)| < \infty$
        \item $x \mapsto h(x,z)$ is Lipschitz, uniformly in $z$
        \item $h$ is Lipschitz (in both arguments) on $\R^d \times B_R$, for each $R > 0$
    \end{itemize}
\end{itemize}
To the best of our knowledge, Theorem \ref{thm:main} is the first existence result for ergodic BSDEs which allows the driver to grow arbitrarily fast in $z$. Moreover, we emphasize that there is no requirement on the dissipativity constant $\delta$ verifying $(b(x) - b(\bx))\cdot (x - \bx) \leq - \delta |x - \bx|^2$ beyond positivity. Thus, while we do not handle the ``weak dissipative" case treated in \cite{Debussche2010ErgodicBU}, neither do we require that $\delta$ be ``large enough" as in \cite{Liang2017RepresentationOH}. We also note that we have established in the Appendix a rigorous connection between the EBSDE \eqref{ebsdeintro} and the PDE \eqref{pdeintro}, which allows us to assert that if $(u,v,\lambda)$ is the unique Markovian solution to \eqref{ebsdeintro}, then in fact $u$ is a classical solution of the PDE \eqref{pdeintro} and $v = \nabla u$ a.e.

We remark that Theorem \ref{thm:main} can be extended to the case where $dX_t = b(X_t) dt + \kappa dW_t$ for an appropriate $\kappa \in \R^{d \times d}$ in a relatively straightforward manner. In this case, the relevant monotonicity condition becomes $\big(\partial_x g(x,\kappa z) - \partial_x g(x, \kappa z)\big) \cdot (z - \bz) \leq 0$, i.e. we need monotonicity of the map $z \mapsto \partial_x g(x,\kappa z)$ rather than $z \mapsto \partial_x g(x,z)$. We do not include this slight generalization here for the sake of notational simplicity.

\subsubsection{Discussion of the monotonicity condition}
The monotonicity condition
\begin{align} \label{monotoneintro}
\big(\partial_x g(x,z) - \partial_x g(x, \bz)\big) \cdot (z - \bz) \leq 0
\end{align}
is the main novelty of the paper, and it is worth discussing it in detail. First, recall that when studying the corresponding PDE \eqref{pdeintro}, it is common to assume strict convexity of $f$ in $z$, which amounts to the requirement that 
\begin{align}
    \big(\partial_z f(x,z) - \partial_z f(x,\bz)\big) \cdot (z - \bz) > C |z - \bz|^2
\end{align}
for some $C > 0$. Thus the existing PDE approaches require a strict monotonicity condition on $z \mapsto \partial_z f(x,z)$, while our main condition is instead the monotonicity of $z \mapsto \partial_x g(x,z)$. 
As for the probabilistic perspective, we note that monotonicity conditions are fairly common in the BSDE literature. For example, for infinite horizon BSDEs of the form \begin{align*}
    dY_t = - f(Y_t,Z_t) dt + Z_t \cdot dW_t, 
\end{align*}
the requirement that $(f(y,z) - f(\bar{y},z)) \cdot (y - \bar{y}) \leq - \delta |y - \bar{y}|^2$ is standard, see \cite{BRIAND1998455}. For FBSDEs of the type 
\begin{align*}
    \begin{cases} 
    dX_t = b(X_t, Y_t, Z_t) dt + \sigma(X_t,Y_t,Z_t)  dW_t, \\
    dY_t = - f(X_t, Y_t, Z_t) dt + Z_t \cdot dW_t, \,\, Y_T = g(X_T), 
    \end{cases}
\end{align*}
a joint monotonicity property of the maps $b,\sigma, f$ is often imposed, see Section 8.4 of \cite{Zhang} and the references therein. But the monotonicity condition considered here is unique from probabilistic perspective in that it really involves monotonicity of a certain \textit{derivative} of $f$, rather than of $f$ itself.

Here are some examples of drivers $f$ which satisfy the hypotheses of our main existence result: 
\begin{itemize}
    \item If $f$ is globally Lipschitz and $f(x,0)$ is bounded, as in \cite{Fuhrman2009ErgodicBA} and \cite{Debussche2010ErgodicBU}, then it satisfies our condition (take $g = 0$). 
    \item $f(x,z) = g(z) + h(x)$, where $g$ is locally Lipschitz and $h$ is Lipschitz and bounded. 
    \item $f(x,z) = g(x,z) = g_1(x)g_2(z)$, where $d = 1$ and $g_1 : \R \to \R$ is bounded, Lipschitz, and non-increasing (non-decreasing), $g_2 : \R \to \R$ is locally Lipschitz and non-decreasing (non-increasing)
\end{itemize}

\subsubsection{Strategy of the proof}
The main step in proving Theorem \ref{thm:main} is to establish an a-priori estimate of the Lipschitz constant of $u$, where $(u,v,\lambda)$ is the Markovian solution of \eqref{ebsdeintro}. This is done with a mix of probabilistic and analytical techniques. More specifically, we first differentiate the PDE \eqref{pdeintro} which we expect $u$ to solve in order to find a PDE system for $\nabla u$. Then, we represent $\nabla u$ in terms of a certain system of infinite-horizon BSDEs, and finally we prove $\linf$ estimates for this BSDE system to obtain the desired estimate on the Lipschitz constant of $u$. While studying the equations for the derivatives of $u$ is a natural approach to obtain Lipschitz estimates, we point out the following interesting features of our approach:
\begin{itemize}
    \item From the perspective of PDEs, it might seem more natural to view the equation for each partial derivative $\partial_{x_i}u$ in isolation, rather than viewing the equations together as a system of PDEs for $\nabla u$. In our case, it is essential that we view the equations for $\nabla u$ as a system - if not, we cannot take advantage of the dissipativity condition \eqref{dissipativeintro} for $b$.
    \item Differentiating the PDE for $u$ first and then representing $\nabla u$ in terms of a system of infinite-horizon BSDEs (as we do in the present paper) does not lead to the same system of infinite-horizon BSDEs that we (formally) get from directly differentiating the BSDE in the initial condition of $X$, in the sense of Proposition 4.1 in \cite{karouipengquenez}. In fact, it is not clear that this latter, more probabilistic, approach can be used to obtain the desired estimates. Thus it seems essential that we use a mix of probabilistic and analytical approaches to obtain our main estimate. 
    \item The $\linf$ estimate we obtain for the system of BSDEs representing $\nabla u$ does not follow directly from existing results for infinite horizon BSDEs as in e.g. \cite{BRIAND1998455}, except for in the special caes $d = 1$. Instead, we must combine existing techniques with a judicious application of Girsanov's Theorem, which in turn is only possible because of the special structure of the PDE system for $\nabla u$. 
\end{itemize}
A formal derivation of the estimate is presented in sub-section \ref{subsec:formal}, and a rigorous proof (which requires verifying that $u$ is sufficiently smooth via elliptic regularity theory) is given in sub-section \ref{subsec:rigorous}.

\subsubsection{Application to ergodic control}
In Section \ref{sec:control}, we apply our results to a control problem in which a controller chooses a process $\alpha$ taking values in $\R^d$ in order to minimize the cost 
\begin{align*}
    J(\alpha) = \limsup_{T \to \infty} \frac{1}{T} \E[\int_0^T r(X_s, \alpha_s) ds], 
\end{align*}
subject to $dX_t = \big(b(X_t) + \alpha_t \big) dt + dW_t$.  The inputs to the model are the vector field $b: \R^d \to \R^d$ and the running cost $r : \R^d \times \R^d \to \R$. We focus on the case that the drift is linear in $\alpha$ and the data is sufficiently smooth because this makes it easier to interpret our monotonicity condition. As shown in \cite{Fuhrman2009ErgodicBA}, an optimal control (in the weak formulation) can be characterized by a BSDE with driver $f(x,z) = H(x,z)$, where $H$ is the Hamiltonian given by
\begin{align}
    H(x,z) \coloneqq \inf_{a}h(x,z,a), \text{ where } \nonumber \\
    h(x,z,a) \coloneqq a \cdot z  + r(x,a). 
\end{align}
In this setting, we obtain new results on the existence of optimal controls when the Hamiltonian $H$ has potentially superquadratic growth in $z$.

In Proposition \ref{prop:controlexist}, we show that the driver $H$ satisfies our conditions when $\partial_a^2 r > 0$ and $\partial_a \partial_x r \big(\partial_a^2 r(x,z)\big)^{-1} \geq 0$, in the sense that $z^T \partial_a \partial_x r(x,a) \big(\partial_a^2 r(x,a)\big)^{-1} z \geq 0$ for all $x,a,z \in \R^d$. This is an interesting condition which to the best of the authors' knowledge is new. We first note that this condition is automatically satisfied when $\partial_a \partial_x r = 0$, i.e. when the running cost $r(x,a) = r_0(x) + r_1(a)$ is separable. In this case, we have 
\begin{align*}
    H(x,z) = \tilde{H}(z) + r_0(x), \text{ where }
    \tilde{H}(z) = \inf_{a \in \R^d} \big(a \cdot z + r_1(a)\big). 
\end{align*}
Thus Theorem \ref{thm:main} applies when $\tilde{H}$ is just locally Lipschitz (and in particular can have arbitrary growth) and $r_0$ is bounded and Lipschitz. Thanks to Proposition \ref{prop:ebsdeoptimality}, this implies the existence of optimal controls for this class of ergodic control problems with super-quadratic Hamiltonians. Our result also covers running costs of the form $r(x,a) = \frac{1}{m} |a|^m + r_0(x)$ for $r_0$ Lipschitz and bounded and $1 < m \leq 2$, which is similar to the case considered in \cite{chasseigneichihara} - indeed, this class of examples does not necessarily satisfy $\partial_a^2 r > 0$, but the Hamiltonian is given explicitly by $H(x,z) = \frac{1}{q} |z|^q + r_0(x)$ with $q > 2$ the conjugate of $m$, which clearly satisfies the hypotheses of our main result. 

To interpret the conditions $\partial_a^2 r > 0$ and $\partial_a \partial_x r(x,z)\partial_a^2 g(x,z)^{-1} \geq 0$ when $r$ is not separable, we recall that it is common in studying standard control problems through the stochastic maximum principle to require that the running cost is convex in $a$ and that the Hamiltonian is jointly convex in $x$ and $a$ (see Theorem 4.14 of \cite{Carmona2016LecturesOB}). In this setting, the requirement that $(x,a) \mapsto h(x,p,a)$ is convex is equivalent to the requirement that the $2d \times 2d$ matrix written in blocks as $\begin{pmatrix}
\partial_x^2 r & \partial_x \partial_a r \\
\partial_a \partial_x r & \partial_a^2 r
\end{pmatrix}$
is positive definite, and of course the convexity of $r$ in $a$ means that $\partial_a^2 r > 0$. To summarize, 
\begin{itemize}
    \item when approaching a standard control problem with linear drift through the maximum principle, it is typical to assume $\partial_a^2 r > 0$, $\begin{pmatrix}
\partial_x^2 r & \partial_x \partial_a r \\
\partial_a \partial_x r & \partial_a^2 r
\end{pmatrix} \geq 0$,
\item for ergodic control problems, our main monotonicity condition reduces to the requirement that $\partial_a^2 r > 0$, $\partial_a \partial_x r \big( \partial_a^2 r\big)^{-1} \geq 0$. 
\end{itemize}
Thus while the conditions $\partial_a^2 r >0$, $\partial_a \partial_x r \big( \partial_a^2 r\big)^{-1} \geq 0$ are somewhat difficult to interpret, they are reminiscent of similar conditions which appear naturally in the control literature. For a concrete class of control problems to which this structural condition applies outside the separated case $r(x,a) = r_0(x) + r_1(a)$, consider the running cost
\begin{align*}
r(x,a) = \frac{1}{2}|a|^2 + c_1(x) a + c_0(x). 
\end{align*}
Then 
\begin{align*}
\partial^2_a r(x,a) = I_{d \times d}, \quad \partial_a \partial_x r = Dc_1(x), 
\end{align*}
with $Dc_1$ the Jacobian matric of $c_1 : \R^d \to \R^d$, so our results apply when $c_0$ is Lipschitz, $c_1$ is locally Lipschitz, and $Dc_1(x) \geq 0$. 

We show in sub-section \ref{subsec:risk} that Theorem \ref{thm:main} yields existence of optimal controls for a risk-sensitive version of the ergodic control problem as well. Indeed, in this case the cost functional is of the form 
\begin{align}
    J^{\delta}(\alpha) = \limsup_{T \to \infty} \frac{1}{T} \big( \frac{1}{\delta} \ln \E[\exp(\delta \int_0^T r(X_t^x, \alpha_t)] dt \big)
\end{align}
for some $\delta > 0$,
and the relevant driver is 
\begin{align} \label{riskdriver}
    f(x,z) = H(x,z) + \frac{\delta}{2} |z|^2, 
\end{align}
with $H$ as above. It is easy to see that the driver in \eqref{riskdriver} satisfies the monotonicity condition \eqref{monotoneintro} if and only if the driver $H(x,z)$ does, so we can obtain very similar results in this setting as in the non-risk-sensitive case. Finally, we show in sub-section \eqref{subsec:weakstrong} that because of the Markovian structure of the ergodic control problem, the weak formulation we consider is actually equivalent to an (arguably) more natural strong formulation.

\subsubsection{Application to forward performance processes}

In \cite{Liang2017RepresentationOH}, EBSDEs are used to construct forward performance processes in factor form. In their setting, the diffusion $X$ is the stochastic factor process, and there is a vector $S$ of stock prices satisfying
\begin{align*}
    dS_t^i = S_t^i \big( \eta^i(X_t) dt + \sum_{j = 1}^d \sigma^{ij}(X_t) dW_t^j \big). 
\end{align*}
Thus the dynamics of the stock prices $S^i$ are impacted by the position of the diffusion $X$. We refer to \cite{Liang2017RepresentationOH} for the definition of forward performance processes and their applications. For our purposes, it is enough to note that Proposition 13 of \cite{Liang2017RepresentationOH} reduces the construction of a forward performance process in factor form to solving the EBSDE \eqref{ebsdeintro} with driver 
\begin{align} \label{forwarddriverintro}
    f(x,z) = \frac{1}{2} \delta^2 \text{dist}^2(\Pi, \frac{\theta(x) + z}{\delta}) - z^T \theta(x) + \frac{1}{2} |\theta(x)|^2, 
\end{align}
where $\delta > 0$, $\Pi \subset \R^d$ is closed and convex and $\theta$ is the market price of risk vector given by $\theta(x) = \sigma(x)^T [\sigma(x) \sigma(x)^T]^{-1} \eta(x)$. In the present work, we simply take the market price of risk vector $\theta : \R^d \to \R^d$ as a given, and ask - what conditions on $\theta$ and $\Pi$ guarantee that the driver $f$ appearing in \eqref{forwarddriverintro} satisfies our conditions? By explicitly computing $\partial_x f$ for $f$ given by \eqref{forwarddriver}, we are able to state in Proposition \ref{prop:forward} a general existence criterion which can be checked on a case-by-case basis.

\subsection{Organization of the paper}

In the remainder of the introduction, we fix notations and conventions. In Section \ref{sec:prelim} we discuss some preliminaries. In Section \ref{sec:existence} we state our main existence result, Theorem \ref{thm:main}. In sub-section \ref{subsec:formal}, we provide a formal derivation of the main a-priori estimate, and sub-section \ref{subsec:rigorous} is devoted to a rigorous proof of Theorem \ref{thm:main}. In Sections \ref{sec:control} and \ref{sec:forward} we present applications to ergodic control and forward performance processes, respectively. Finally, the Appendix collects some auxiliary results concerning the relevant PDEs and their connection to BSDEs. 

\subsection{Notation and conventions}

\paragraph{Probabilistic setup} We work on a fixed probablility space $(\Omega, \sF, \pr)$, which hosts a $d$-dimensional Brownian motion $W$. We denote by $\bF = (\sF_t)_{t \geq 0}$ the augmented filtration of the Brownian motion $W$. We denote by $\E[\cdot]$ expectation with respect to the probability measure $\pr$. 

\paragraph{Spaces of processes} We denote by $\sP$ the space of progressively measurable processes taking values in some Euclidean space. When necessary, we specify the target in a natural way, e.g. $\sP(\R^n)$ denotes the space of progressively measurable processes taking values in $\R^n$. $\sP^2$ is the space of $Z \in \sP$ such that 
\begin{align*}
    \norm{Z}_{\sP^2}^2 \coloneqq \E[\int_0^{\infty} |Z_t|^2 dt] < \infty
\end{align*}
For $1 \leq p \leq \infty$, we denote by $\lpee$ the usual space of $p$-integrable random variables, and by $\spee$ the space of continuous, adapted processes $X$ such that \begin{align*}
    \norm{X}_{\spee} \coloneqq \norm{\sup_{0 \leq t \leq \infty} |X_t|}_{\lpee} < \infty. 
\end{align*}We also define and notate localized versions of these spaces in the usual way, e.g. $\sP^2_{loc}$ is the space of locally square integrable processes. Finally, we note that we use the standard notation $\mathcal{E}(\cdot)$ for the stochastic exponential, i.e. if $M$ is a martingale then $\mathcal{E}(M)$ denotes the local martingale $\mathcal{E}(M)_t = \exp(M_t - \frac{1}{2} \langle M \rangle_t)$, $\langle M \rangle$ being the quadratic variation of $M$.

\paragraph{Spaces of functions}
We say that a function $u$ defined on some open set in $\R^d$ is $C^k$ if it is $k$-times continuously differentiable. 
Given an open set $\Omega \subset \R^d$, we define
\begin{align*}
    [u]_{\calpha(\Omega)} = \sup_{x,y \in \Omega, x \neq y} \frac{|u(x) - u(y)|}{|x-y|^{\alpha}}. 
\end{align*}
We denote by $C^{\alpha}(\Omega)$ the space of all functions $u$ defined on $\Omega$ and taking values in some Euclidean space such that 
\begin{align*}
    \norm{u}_{\calpha(\Omega)} \coloneqq \norm{u}_{\linf(\Omega)} + [u]_{\calpha(\Omega)} < \infty
\end{align*}
We denote by $C^{k,\alpha}(\Omega)$ the space of all functions $u$ taking values in some Euclidean space such that 
\begin{align*}
    \norm{u}_{C^{k,\alpha}(\Omega)} \coloneqq \sup_{|\beta| < k} \norm{D^{\beta}}_{\linf(\Omega)} + \sum_{ |\beta| = k} \norm{D^{\beta} u}_{\calpha(\Omega)} < \infty. 
\end{align*}
Here $|\beta|$ denotes the order of a multi-index $\beta$. We use the shorthand $C^{k,\alpha} = C^{k,\alpha}(\R^d)$. Finally, we define and notate localized versions of all these spaces, e.g. $u \in \calpha_{loc}(\Omega)$ means that for all open sets $U$ compactly contained in $\Omega$, $u \in \calpha(U)$. On one occasion, we will have need of the Sobolev spaces $H_0^1(\Omega), W^{2,p}(\Omega)$, etc., and we refer to \cite{Gilbarg1977EllipticPD} for the relevant definitions. Finally, we note that throughout the paper we will use $B_R$ to denote the unit ball in $\R^d$. 

\paragraph{Notation for stochastic integrals}

When $Z \in \sP^2_{loc}(\R^d)$, we will write $\int Z \cdot dW$ for the local martingale $\sum_{j = 1}^d \int Z^j dW^j$, where $Z^j$ is the $j^{th}$ component of $Z$. At times, we will also deal with integrands $Z \in \sP^2_{loc}(\R^{d \times d})$. In this case, we will write $Z^i$ for the $i^{th}$ row of $Z$, and then $\int Z dW$ will denote the $\R^d$-valued local martingale whose $i^{th}$ component is given by $\big(\int Z dW\big)^i = \int Z^i dW = \sum_{ij} \int Z^{ij} dW^j$, where now $Z^{ij}$ denotes the element of $Z$ in row $i$ and column $j$ (this is just the usual way of interpreting $Z dW$ though matrix multiplication). The use of these two different conventions (rather than viewing all integrands as taking values in $\R^{n \times d}$ for some $n$ and always interpreting $Z dW$ by matrix multiplication, for example) is motivated by the fact that we will usually have $Z = \nabla u(X)$ for some $ u: \R^d \to \R$ (in which case the first convention is natural) or $z = Dv(X)$ for some $v : \R^d \to \R^d$ (in which case the second convention is more natural). 

\paragraph{Notation for derivatives}
We use the variables $x = (x_1,...,x_d)$ and $z = (z_1,...,z_d)$ for elements of $\R^d$. The notation $\partial_{x_i}$ means differentiation in the variable $x_i$, and similarly for $\partial_{z_j}$. For $b = b(x) : \R^d \to \R^d$, we use $Db$ to denote the matrix $(Db)_{ij} = \partial_j b^i$. For a map $f = f(x,z) : \R^d \times \R^d \to \R^d$, we write $\partial_x^2 f$ for the Hessian of $f$ in the variable $x$, i.e. $(\partial_x^2 f(x,z))_{ij} = \partial_{x_i} \partial_{x_j} f(x,z)$. Likewise, $\partial_z^2 f(x,z)$ is the Hessian in the variable $z$, while $\partial_z \partial_x f$ denotes the matrix $\big(\partial_z \partial_x f(x,z)\big)_{ij} = \partial_{z_j} \partial_{x_i} f(x,z)$. The notation $\partial_{x} \partial_z f$ is similar. Thus $\partial_z \partial_x f$ is the derivative of the map $z \mapsto \partial_x f(x,z)$, and of course $\partial_z \partial_x f = \big(\partial_x \partial_z f\big)^T$. 

\section{The main result} \label{sec:prelim}
We recall the ergodic BSDE 
\begin{align} \label{ebsde}
    \begin{cases}
    dX_t = b(X_t) dt +  dW_t, \\
    dY_t = - f(X_t,Z_t) dt + \lambda dt + Z_t \cdot dW_t.
    \end{cases}
\end{align}
The data of the problem consists of a vector field $b = b(x) : \R^d \to \R^d$ and a driver $f = f(x,z) : \R^d \times \R^d \to \R$. The diffusion $X$ takes values in $\R^d$, and the unknowns $Y$ and $Z$ take values in $\R$ and $\R^d$, respectively. We are looking for a \textbf{Markovian solution} to \eqref{ebsde}, i.e. a triple $(u,v,\lambda)$ with $\lambda \in \R$ and $(u,v) : \R^d \to \R \times \R^d$ such that for any $x \in \R^d$, the processes $Y^x = u(\cdot, X^x)$, $Z^x = v(\cdot, X^x)$ satisfy 
\begin{align*}
    Y_t^x = Y_T^x + \int_t^T f(X^x_s,Z^x_s) ds - \lambda(T-t) - \int_t^T Z^x_s \cdot dW_s
\end{align*}
for all $0 \leq t \leq T < \infty$. Here $X^x$ denotes the solution of the SDE 
\begin{align*}
    X_t^x = x + \int_0^t b(X_s^x) ds + W_t. 
\end{align*}
We recall also the corresponding PDE 
\begin{align} \label{pde}
    \frac{1}{2} \Delta u + b \cdot \nabla u + f(x,\nabla u) = \lambda \text{ in } \R^d. 
\end{align}
We will make the following assumptions regarding the data $b$ and $f$. 
\begin{assumption} \label{assump:existence}
The vector field $b : \R^d \to \R^d$ and the driver $f = g+h: \R^d \times \R^d \to \R$ satisfy
\begin{enumerate}
    \item (dissipativity and Lipschitz property for $b$) The vector field $b$ is Lipschitz, and there is a constant $\delta > 0$ such that \begin{align} \label{dissipative} 
        \big(b(x) - b(\bx)\big) \cdot (x - \bx) \leq - \delta |x - \bx|^2
    \end{align}
    holds for all $x, \bar{x} \in \R^d$. 
    \item (growth and regularity for $g$) The function $g = g(x,z)$ is Lipschitz on $\R^d \times B_R$ for each $R > 0$. Moreover $\sup_{x \in \R^d} |g(x,0)| < \infty$. 
     \item (monotonicity of $\partial_x g$) The estimate
    \begin{align} \label{monotone}
   \big(\partial_x g(x,z) - \partial_x g(x, \bz)\big) \cdot (z - \bz) \leq 0
\end{align}
holds for almost every $(x,z,\bz) \in (\R^d)^3$, where $\partial_x g$ is the weak derivative of $g$ in the variable $x$.  
    \item (growth and regularity for $h$) The function $h = h(x,z) : \R^d \times \R^d \to \R$ is globally Lipschitz in $x$, i.e. $|h(x,z) - h(\bx,z)| \leq L|x - \bx|$ for all $x,\bx, z \in \R^d$ and some $L > 0$. Moreover, $h$ is Lipschitz (in both arguments) on $\R^d \times B_R$ for each $R > 0$, and $\sup_{x \in \R^d} |h(x,0)| < \infty$. 
\end{enumerate}
\end{assumption}
\begin{remark}
The first condition \eqref{dissipative} is standard in the ergodic BSDE literature, and guarantees that the diffusion $X$ is exponentially ergodic, in the sense that 
\begin{align*}
    |X^x_t - X^{\bx}_t| \leq \exp(-2\delta t)|x - \bx|^2
\end{align*}
a.s. for each $t \geq 0$. 
\newline \newline 
The growth and regularity conditions on $g$ and $h$ ensure that if $\pi$ is an appropriate truncation function, then $\tilde{f}(x,z) = f(x,\pi(z))$ satisfies standard Lipschitz conditions for ergodic BSDEs, which will be important for the truncation technique we use to prove Theorem \ref{thm:main}. Setting $g = 0$, we see that we easily cover the case that $f$ is globally Lipschitz and $f(\cdot,0)$ is bounded, which is the condition used in \cite{Fuhrman2009ErgodicBA} and \cite{Debussche2010ErgodicBU}, albeit in a more general setting. 
\newline \newline 
The third condition concerns the monotonicity of the map $z \mapsto  \partial_xg(x,z)$, and is our main qualitative assumption. We note that when  $d = 1$ and $g$ is $C^2$, the condition \eqref{monotone} is equivalent to the simpler condition $\partial_{zx} g \leq 0$. 
\end{remark}

We are now ready to state the main result. 
We define the quantity $M$ by
\begin{align} \label{mdef}
    M \coloneqq \norm{ \partial_xg}_{\linf(\R^d \times B_1)} + \norm{\partial_x h}_{\linf(\R^d \times \R^d)}
\end{align}
The main result is the following:
\begin{thm} \label{thm:main}
Suppose that Assumption \ref{assump:existence} holds. Then, there exists a Markovian solution $(u, v, \lambda)$ to \eqref{ebsde}. The function $u$ has linear growth, and $v$ satisfies
\begin{align*}
    \norm{v}_{\linf} \leq \frac{M}{\delta}. 
\end{align*}
Moreover, the solution is unique in the class of Markovian solutions with $v$ bounded and $u$ of linear growth, in the sense that if $(\bar{u},\bar{v},\bar{\lambda})$ is another solution with the same properties then $v = \bar{v}$, $\lambda = \bar{\lambda}$, while $u$ and $\bar{u}$ differ by a constant. Finally, $u$ is in fact a classical solution of the PDE \eqref{pde}, and $v = \nabla u$ a.e.
\end{thm}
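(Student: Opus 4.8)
The plan is to reduce to the globally Lipschitz case by truncation, where existence is classical, and then to prove the gradient bound $\norm{v}_{\linf}\le M/\delta$ in a form that does not depend on the truncation level; this uniform estimate is the crux, the rest being assembly and a by-now-standard uniqueness argument. Concretely, I would fix a smooth radial truncation $\pi_R:\R^d\to B_R$ of the form $\pi_R(z)=c_R(z)z$ with $0<c_R\le1$ and $\pi_R=\mathrm{id}$ on $B_{R/2}$, and set $f^R:=g(\cdot,\pi_R(\cdot))+h(\cdot,\pi_R(\cdot))=:g^R+h^R$. By Assumption \ref{assump:existence}, $f^R$ is globally Lipschitz with $f^R(\cdot,0)=f(\cdot,0)$ bounded; the radial form of $\pi_R$ is what guarantees that $\partial_x g^R$ still satisfies \eqref{monotone} when tested against $\bar z=0$, and that the constant built from $g^R,h^R$ as in \eqref{mdef} is $\le M$ once $R\ge2$. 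For the Lipschitz driver $f^R$ and dissipative, Lipschitz $b$, the existing theory (\cite{Fuhrman2009ErgodicBA,Debussche2010ErgodicBU}, together with the Appendix) yields a Markovian solution $(u^R,v^R,\lambda^R)$ with $v^R$ bounded, $u^R$ of linear growth, $u^R\in C^{2,\alpha}_{loc}$ a classical solution of the corresponding PDE, and $v^R=\nabla u^R$. Granting $\norm{v^R}_{\linf}\le M/\delta$ uniformly in $R$ (for $R$ large), choosing $R>2M/\delta$ forces $\nabla u^R$ to take values in $B_{R/2}$, where $\pi_R=\mathrm{id}$, so $f^R(x,\nabla u^R(x))=f(x,\nabla u^R(x))$; hence $(u^R,v^R,\lambda^R)$ solves the original EBSDE, and existence, the bound $\norm{v}_{\linf}\le M/\delta$, linear growth of $u$, and (via the Appendix) that $u$ is classical with $v=\nabla u$ a.e.\ all follow at once.

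For the uniform gradient estimate I would argue as in sub-section \ref{subsec:formal}, making it rigorous in sub-section \ref{subsec:rigorous} by first mollifying $b,g,h$ to smooth data (this preserves the dissipativity of $b$ and, using the full monotonicity \eqref{monotone}, the monotonicity of $\partial_x g$, which explains why \eqref{monotone} is posed for general $\bar z$) and then letting the mollification parameter tend to zero, the constant $M/\delta$ surviving because it is assembled from sup-norms. Write $u=u^R$, $w=\nabla u$. Differentiating $\tfrac12\Delta u+b\cdot\nabla u+f^R(x,\nabla u)=\lambda$ in $x_k$ gives the coupled system
\[
\tfrac12\Delta w^k+b\cdot\nabla w^k+\partial_z f^R(x,w)\cdot\nabla w^k=-\big((Db)^{T}w\big)_k-(\partial_x f^R)_k(x,w),\qquad k=1,\dots,d,
\]
and it is essential to keep this as a system for the vector $w$: the off-diagonal coupling through $(Db)^{T}$ is exactly what lets us invoke the dissipativity $z^{T}Db(x)z\le-\delta|z|^2$ (Proposition \ref{prop:dissipative}) on the whole vector $w$, which is lost if the $\partial_{x_k}u$ are treated separately. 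Fix $x$, let $X=X^x$, $\mathcal{W}_t:=w(X_t)$, $\mathcal{Z}^k_t:=\nabla w^k(X_t)$; \Ito's formula and the system yield
\[
d\mathcal{W}^k_t=\Big[-\big((Db(X_t))^{T}\mathcal{W}_t\big)_k-(\partial_x f^R)_k(X_t,\mathcal{W}_t)-\partial_z f^R(X_t,\mathcal{W}_t)\cdot\mathcal{Z}^k_t\Big]dt+\mathcal{Z}^k_t\cdot dW_t.
\]
The $z$-dependence of the driver is linear with bounded (because of the truncation) progressive coefficient $\beta_t:=\partial_z f^R(X_t,\mathcal{W}_t)$, so Girsanov applies: under $\Q$ with $d\widetilde W_t=dW_t-\beta_t\,dt$ the last drift term disappears and, in vector form, $d\mathcal{W}_t=\big[-(Db(X_t))^{T}\mathcal{W}_t-\partial_x f^R(X_t,\mathcal{W}_t)\big]dt+\mathcal{Z}_t\,d\widetilde W_t$. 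Splitting $\partial_x f^R=\partial_x g^R+\partial_x h^R$ and using the monotonicity of $\partial_x g^R$ against $0$ together with $\norm{\partial_x h^R}_{\linf}\le\norm{\partial_x h}_{\linf}$ gives, for all $y\in\R^d$, the key inequality
\[
y\cdot\big((Db(x))^{T}y+\partial_x f^R(x,y)\big)\le-\delta|y|^2+M|y|.
\]

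Now set $R_0:=M/\delta$, $C:=\norm{\nabla u^R}_{\linf}<\infty$ (if $R_0\ge C$ there is nothing to prove), and $\Phi(y):=(|y|^2-R_0^2)^{+}$. Applying \Ito's formula under $\Q$ to $\Phi(\mathcal{W}_t)$ and inserting the previous display, on $\{|\mathcal{W}_t|>R_0\}$ the drift of $\Phi(\mathcal{W}_t)$ is at least $2\delta|\mathcal{W}_t|\big(|\mathcal{W}_t|-R_0\big)\ge c\,\Phi(\mathcal{W}_t)$ with $c:=\delta R_0/C>0$, while $\Phi(\mathcal{W}_t)=0$ on $\{|\mathcal{W}_t|\le R_0\}$ (and the local-time term at $\{|\mathcal{W}_t|=R_0\}$ has a favorable sign, $\Phi$ being $C^{1,1}$ and convex in $|y|^2$). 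Hence $e^{-ct}\Phi(\mathcal{W}_t)$ is a bounded $\Q$-submartingale, so $\Phi(\nabla u^R(x))=\Phi(\mathcal{W}_0)\le\E^{\Q}[e^{-cT}\Phi(\mathcal{W}_T)]\le C^2e^{-cT}\to0$ as $T\to\infty$, i.e.\ $|\nabla u^R(x)|\le M/\delta$; as $x$ is arbitrary, $\norm{v^R}_{\linf}\le M/\delta$. I expect this estimate to be the main obstacle: one must work with the \emph{differentiated-PDE} representation of $\nabla u$ rather than the naive probabilistic one (Proposition~4.1 of \cite{karouipengquenez} does not yield a useful system here), observe that the $z$-dependence is linear so that Girsanov removes the dangerous term, and then run an infinite-horizon $\linf$ argument using only ``monotonicity against $0$ plus linear growth'' instead of a full Lipschitz/monotone structure. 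In dimension $d=1$ the system is scalar and classical infinite-horizon BSDE estimates apply directly, but for $d>1$ the Girsanov step is what makes the argument go through.

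For uniqueness, let $(\bar u,\bar v,\bar\lambda)$ be another Markovian solution with $\bar v$ bounded and $\bar u$ of linear growth; by the Appendix it is also a classical solution of \eqref{pde} with $\bar v=\nabla\bar u$. Put $p:=u-\bar u$, which has linear growth and bounded gradient. Subtracting the two PDEs and linearizing, $\tfrac12\Delta p+(b+\gamma)\cdot\nabla p=\lambda-\bar\lambda$ with $\gamma(x):=\int_0^1\partial_z f\big(x,\nabla\bar u(x)+s\nabla p(x)\big)\,ds$, which is bounded and measurable because $\nabla u,\nabla\bar u$ stay in a fixed ball where $f$ is Lipschitz. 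The drift $b+\gamma$ is dissipative outside a ball, so the SDE $dX^\gamma_t=(b+\gamma)(X^\gamma_t)dt+dW_t$ has a weak solution which is exponentially ergodic, with a fully supported invariant measure $\mu_\gamma$ and $\sup_t\E[|X^{\gamma,x}_t|^{2}]<\infty$. Since $\nabla p$ is bounded, $p(X^{\gamma,x}_t)-(\lambda-\bar\lambda)t$ is a true martingale by \Ito's formula, so $\E[p(X^{\gamma,x}_t)]=p(x)+(\lambda-\bar\lambda)t$; boundedness of $\sup_t\E[|p(X^{\gamma,x}_t)|]$ forces $\lambda=\bar\lambda$. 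Then $\E[p(X^{\gamma,x}_t)]=p(x)$ for all $t$; letting $t\to\infty$ and using $X^{\gamma,x}_t\Rightarrow\mu_\gamma$ together with uniform integrability of $\{p(X^{\gamma,x}_t)\}_t$ gives $p(x)=\int p\,d\mu_\gamma$, a constant independent of $x$. Hence $p$ is constant, so $v=\nabla u=\nabla\bar u=\bar v$, $\lambda=\bar\lambda$, and $u$ and $\bar u$ differ by a constant, completing the proof.
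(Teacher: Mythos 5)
Your proposal follows the paper's strategy in all essentials: truncate radially so that the weak monotonicity ``against $\bar z=0$'' survives, reduce to the globally Lipschitz case, differentiate the PDE to obtain a \emph{system} for $\nabla u$ so that the dissipativity of $b$ can be used on the whole vector, remove the $\partial_z f\cdot\nabla v^i$ term by Girsanov, and make the estimate rigorous by mollifying the data and passing the bound through the limit with interior elliptic estimates. Two local steps differ. First, you close the gradient bound by showing $e^{-ct}\,(|\mathcal{W}_t|^2-R_0^2)^{+}$ is a bounded $\Q$-submartingale, whereas the paper applies \Ito to $|Y^x_t|^2$ directly, absorbs the zeroth-order terms by Young's inequality into $-\frac{(M')^2}{\delta}$, and integrates against $e^{-\delta s}$; the two computations use the same inequality $y\cdot\big((Db)^{T}y+\partial_x f(x,y)\big)\le-\delta|y|^2+M|y|$ and are essentially equivalent, your version avoiding Young's inequality at the price of an \Ito--Tanaka local-time argument for the non-$C^2$ test function. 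Second, for uniqueness the paper simply observes that any two solutions with bounded $v$ both solve the \emph{same} truncated Lipschitz EBSDE and cites the known uniqueness result of Debussche--Hu--Tessitore, while you linearize the PDE and run an ergodicity argument for the perturbed diffusion $b+\gamma$; your route is self-contained but requires justifying exponential ergodicity and convergence of $\E[p(X^{\gamma,x}_t)]$ for $p$ of linear growth (a Wasserstein-type convergence plus uniform moment bounds), which you assert rather than prove --- the paper's reduction is the cleaner path. Neither variation affects correctness.
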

\begin{remark} \label{rmk:uniqueness}
The uniqueness part of Theorem \ref{thm:main} follows easily from uniqueness for standard Lipschitz ergodic BSDEs, which can be obtained by adapting the proof of Theorem 3.11 in \cite{Debussche2010ErgodicBU}. More precisely, if $(u,v,\lambda)$ and $(\bar{u},\bar{v}, \bar{\lambda})$ are Markovian solutions to \eqref{ebsde}, then they are also both solutions to the ergodic BSDE 
\begin{align} \label{ebsdetrunc1}
    dY_t = \lambda dt - \tilde{f}(X_t,Z_t) dt + Z_t \cdot dW_t,
\end{align}
where $\tilde{f}(X_t, Z_t) = \tilde{f}(X_t,\pi(Z_t))$ and $\pi : \R^d \to \R^d$ is the orthogonal projection onto the ball of radius $\max \{\norm{v}_{\linf}, \norm{\bar{v}}_{\linf}\}$. Furthermore $\tilde{f}$ is globally Lipschitz thanks to Assumption \ref{assump:existence}. We can thus conclude from uniqueness for the equation \eqref{ebsdetrunc1} that $v = \bar{v}$, $\lambda = \bar{\lambda}$, and $u$ and $\bar{u}$ differ by a constant, as required. Thus, in what follows we focus on the question of existence. 
\end{remark}

\section{Existence for EBSDEs under monotonicity} \label{sec:existence}

The goal of this section is to present a proof of Theorem \ref{thm:main}. 
\newline \newline 
For later use, we recall the following elementary fact, which gives a differential characterization of the dissipativity condition \eqref{dissipative}. 

\begin{prop} \label{prop:dissipative}
Suppose that $\phi : \R^d \to \R^d$ is $C^1$. Then $\phi$ satisfies the dissipativity condition
\begin{align} \label{dissipativelemma}
    \big(\phi(x) - \phi(\bx)\big) \cdot (x - \bx) \leq - \epsilon |x - \bx|^2
\end{align}
holds for $\epsilon \geq 0$ if and only if the condition
\begin{align*}
    z^T D\phi(x) z \leq - \epsilon |z|^2
\end{align*}
holds for all $x,z \in \R^d$. 
\end{prop}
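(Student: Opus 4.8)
The plan is to prove the two implications separately, using the fundamental theorem of calculus along line segments to pass between the ``integrated'' dissipativity condition \eqref{dissipativelemma} and its infinitesimal counterpart.

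First I would prove the easy direction: suppose $z^T D\phi(x) z \leq - \epsilon |z|^2$ for all $x, z \in \R^d$. Fix $x, \bx \in \R^d$ and set $\gamma(t) = \bx + t(x - \bx)$ for $t \in [0,1]$. Since $\phi$ is $C^1$, the chain rule gives $\frac{d}{dt}\phi(\gamma(t)) = D\phi(\gamma(t))(x - \bx)$, so that
\begin{align*}
\big(\phi(x) - \phi(\bx)\big) \cdot (x - \bx) = \int_0^1 (x-\bx)^T D\phi(\gamma(t))(x - \bx)\, dt \leq \int_0^1 -\epsilon |x - \bx|^2 dt = -\epsilon|x-\bx|^2,
\end{align*}
where the inequality uses the infinitesimal hypothesis with the specific vector $z = x - \bx$. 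This gives \eqref{dissipativelemma}.

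For the converse, suppose \eqref{dissipativelemma} holds. Fix $x_0 \in \R^d$ and a direction $z \in \R^d$. Apply \eqref{dissipativelemma} with $x = x_0 + s z$ and $\bx = x_0$ for $s > 0$: this yields $\big(\phi(x_0 + sz) - \phi(x_0)\big) \cdot (sz) \leq -\epsilon s^2 |z|^2$, hence $\frac{1}{s}\big(\phi(x_0 + sz) - \phi(x_0)\big) \cdot z \leq -\epsilon |z|^2$ for every $s > 0$. Letting $s \downarrow 0$ and using that $\phi$ is $C^1$ (so the difference quotient converges to the directional derivative $D\phi(x_0) z$), we obtain $z^T D\phi(x_0) z \leq -\epsilon |z|^2$. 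Since $x_0$ and $z$ were arbitrary, this is the claimed infinitesimal condition.

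I do not expect any serious obstacle here; the statement is genuinely elementary and both directions are one-line calculus arguments once the right line segment is chosen. The only point requiring a modicum of care is the direction ``\eqref{dissipativelemma} $\Rightarrow$ infinitesimal'': one must take the one-sided limit $s \downarrow 0$ rather than a two-sided limit (the hypothesis is only stated for the segment, though by symmetry in $x, \bx$ it is in fact symmetric), and invoke $C^1$ regularity to guarantee the difference quotient converges. If $\phi$ were merely differentiable this would still work, but $C^1$ is what is assumed and is more than enough. It is worth noting that the same computation shows the ``only if'' direction does not even need the full strength of \eqref{dissipativelemma} for all pairs, only along rays emanating from each point.
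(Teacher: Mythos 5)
Your proof is correct and follows essentially the same route as the paper: the ``integrated $\Rightarrow$ infinitesimal'' direction via difference quotients along the ray $x_0 + sz$, and the converse by integrating $z^T D\phi z$ along the line segment (the paper phrases this through the auxiliary function $\alpha(t) = (\phi(x+tz)-\phi(x))\cdot z$ and its derivative, which is the same computation as your fundamental-theorem-of-calculus argument). No gaps.
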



Our approach to proving Theorem \ref{thm:main} will be by truncation. More precisely, first choose a smooth increasing function $\rho : [0,\infty) \to [0,\frac{M}{\delta} + 1]$ such that 
\begin{itemize}
    \item $\rho(x) = x$ for $x \leq \frac{M}{\delta}$, 
    \item $\rho(x) = \frac{M}{\delta} + 1$ for all $x > \frac{M}{\delta} + 2$. 
\end{itemize}
Then we set 
\begin{align} \label{pidef}
    \pi(z) = \frac{\rho(|z|)}{|z|} z,
\end{align}
and note that $\pi$ is smooth, radial, bounded, and $\pi(z) = z$ when $|z| \leq \frac{M}{\delta}$. Next, we set 
\begin{align} \label{tildef}
    \tilde{f}(x,z) = f(x,\pi(z)), 
\end{align}
and note that $\tilde{f}$ is Lipschitz, so the ergodic BSDE 
\begin{align} \label{ebsdetrunc}
    dY_t = \lambda dt - \tilde{f}(X_t,Z_t) dt + Z_t \cdot dW_t
\end{align}
has a Markovian solution $(u,v,\lambda)$ with $v$ bounded. If we can show that $\norm{v}_{\linf} \leq \frac{M}{\delta}$, then $(u,v,\lambda)$ is in fact a Markovian solution \eqref{ebsde}. Actually, the proof only works as outlined here when $b$ and $f$ are sufficiently smooth. The general case is more subtle, but the basic strategy is the same. 

\subsection{Formal derivation of the a-priori estimate} \label{subsec:formal}

As explained above, proving Theorem \ref{thm:main} boils down to establishing the estimate 
\begin{align} \label{mainest}
    \norm{v}_{\linf} \leq \frac{M}{\delta}
\end{align}
whenever $(u,v,\lambda)$ is a Markovian solution to \eqref{ebsde} with $f$ globally Lipschitz and satisfying Assumption \ref{assump:existence}. For the reader's convenience, we give in this sub-section a formal derivation of \eqref{mainest} which provides intuition while avoiding the technical details. 
\newline \newline
We suppose in our formal argument that $d = 1$ (this simplifies a key step in the derivation), and we also suppose that $u$ is in fact a smooth solution of the PDE 
\begin{align} \label{onedpde}
   \frac{1}{2} \partial_x^2 u + b \partial_x u + f(x,\partial_x u) = \lambda, 
\end{align}
and that $v = \partial_x u$. Then we differentiate \eqref{onedpde} to find that 
\begin{align*}
    \frac{1}{2} \partial_x^2 v + b' v  + \partial_x f(x,v) + \big(\partial_z f(x, v) + b\big) \partial_x v = 0.
\end{align*}
We see that in fact $(\tilde{Y}, \tilde{Z}) = (v(X), \partial_x v(X))$ solves the infinite horizon BSDE 
\begin{align*}
    d\tilde{Y}_t = - \psi(X_t, \tilde{Y}_t, \tilde{Z}_t) dt + \tilde{Z}_t \cdot dW_t, 
\end{align*}
with \begin{align*}
    \psi(x,y,z)
    = b'(x) y + \partial_x g(x,y) + \partial_x h(x,v(x)) + \partial_z f(x,v(x)) z
\end{align*}
The dissipativity condition \eqref{dissipative} and the monotonicity condition \eqref{monotone} combine to show that $\psi$ is monotone in $y$, and $(\psi(x,y,z) - \psi(x,\bar{y},z))(y - \bar{y}) \leq - \delta |y - \bar{y}|^2$, so that estimates for infinite horizon BSDEs (see the proof of Lemma 3.1 in \cite{BRIAND1998455}) show that 
\begin{align*}
   \norm{v}_{\linf} = \norm{\tilde{Y}}_{\sinf} \leq \frac{1}{\delta} \sup_{x \in \R^d} |\psi(x,0,0)| \leq \frac{M}{\delta},
\end{align*}
as desired.

\subsection{Proof of Theorem \ref{thm:main}} \label{subsec:rigorous}

This sub-section is devoted to a proof of Theorem \ref{thm:main}. We use heavily the connection between the ergodic BSDE \eqref{ebsde} and the PDE \eqref{pde} established in the appendix. 

\begin{lem}[a-priori estimate] \label{lem:apriori}
Suppose Assumption \ref{assump:existence} holds, and in addition $f = g+ h$ is globally Lipschitz and $b$,$g$,$h$ are $C^{1,\alpha}_{loc}$. Let $(u,v)$ be the unique Markovian solution to the ergodic BSDE \eqref{ebsde} with $u$ of linear growth and $v$ bounded. Then we have
\begin{align*}
    \norm{v}_{L^{\infty}} \leq \frac{M'}{\delta}, 
\end{align*}
where $M'= \sup_{x \in \R^d} |\partial_x g(x,0)| + \norm{\partial_x h}_{\linf(\R^d \times \R^d)}$. 
\end{lem}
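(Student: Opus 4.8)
The plan is to turn the heuristic of sub-section~\ref{subsec:formal} into a rigorous argument; the two ingredients missing from that heuristic are (i) enough elliptic regularity to differentiate \eqref{pde}, and (ii) a Girsanov change of measure that kills the $z$-dependence of the infinite-horizon driver obtained after differentiating --- this is needed precisely because for $d>1$ the equations for $\nabla u$ form a \emph{system}, to which no comparison principle applies. So I would first combine the Appendix (which identifies the Markovian solution $(u,v)$ with a solution of \eqref{pde}, $v=\nabla u$) with interior Schauder estimates: writing $\tfrac12\Delta u=\lambda-b\cdot\nabla u-f(\cdot,\nabla u)$ and bootstrapping (once $u\in C^{2,\alpha}_{loc}$ the right-hand side lies in $C^{1,\alpha}_{loc}$, using $b,g,h\in C^{1,\alpha}_{loc}$) yields $u\in C^{3,\alpha}_{loc}$, hence $v=\nabla u\in C^{2,\alpha}_{loc}$.

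Next I would differentiate \eqref{pde} in $x_i$: each $v_i=\partial_{x_i}u$ solves, classically,
\begin{align*}
\tfrac12\Delta v_i+\big(b(x)+\partial_z f(x,v)\big)\cdot\nabla v_i+\sum_k\partial_{x_i}b^k(x)\,v_k+\partial_{x_i}f(x,v)=0.
\end{align*}
Set $\theta(x):=\partial_z f(x,v(x))$, which is bounded because $f$ is globally Lipschitz and $v$ is bounded. For each $T$, let $\mathbb{Q}$ be the measure on $\sF_T$ with density $\mathcal{E}\big(\int_0^\cdot\theta(X_s)\cdot dW_s\big)_T$ (a true martingale, $\theta$ being bounded), so that $W^{\mathbb{Q}}_\cdot:=W_\cdot-\int_0^\cdot\theta(X_s)\,ds$ is a $\mathbb{Q}$-Brownian motion and $X$ has $\mathbb{Q}$-drift $b+\theta$. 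Applying \Itos formula to $v(X_\cdot)$ under $\mathbb{Q}$, invoking the displayed PDE, and setting $\tilde Y_s:=v(X_s)$, $\tilde Z_s:=Dv(X_s)$ (so that $\partial_x g(X_s,v(X_s))=\partial_x g(X_s,\tilde Y_s)$), one finds that on $[0,T]$
\begin{align*}
d\tilde Y_s=-\Psi(s,\tilde Y_s)\,ds+\tilde Z_s\,dW^{\mathbb{Q}}_s,\qquad \Psi(s,y):=\big(Db(X_s)\big)^{T}y+\partial_x g(X_s,y)+\partial_x h(X_s,v(X_s)),
\end{align*}
an infinite-horizon BSDE whose driver $\Psi$ does not depend on $\tilde Z$. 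By Proposition~\ref{prop:dissipative}, $w^{T}Db(x)w\le-\delta|w|^2$ for all $x,w$; combined with the monotonicity \eqref{monotone} this gives $\langle\Psi(s,y)-\Psi(s,\bar y),\,y-\bar y\rangle\le-\delta|y-\bar y|^2$, while $|\Psi(s,0)|\le\sup_x|\partial_x g(x,0)|+\norm{\partial_x h}_{\linf}=M'$.

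To extract the bound, fix $x$ (so $\tilde Y_0=v(x)$) and, for $\epsilon>0$, apply \Itos formula to $\phi_\epsilon(\tilde Y_s)$ with $\phi_\epsilon(y):=\sqrt{|y|^2+\epsilon^2}-\epsilon$. Since $\nabla^2\phi_\epsilon\ge0$, $|\nabla\phi_\epsilon|\le1$ and $\phi_\epsilon\le|\cdot|$, the dissipativity of $\Psi$ gives $-\langle\nabla\phi_\epsilon(\tilde Y_s),\Psi(s,\tilde Y_s)\rangle\ge\delta\,\phi_\epsilon(\tilde Y_s)-M'-\delta\epsilon$, hence
\begin{align*}
d\big(e^{-\delta s}\phi_\epsilon(\tilde Y_s)\big)\ge-e^{-\delta s}(M'+\delta\epsilon)\,ds+e^{-\delta s}\langle\nabla\phi_\epsilon(\tilde Y_s),\tilde Z_s\,dW^{\mathbb{Q}}_s\rangle.
\end{align*}
Localizing the stochastic integral (using that $Dv$ is locally bounded and that $X$ does not explode under $\mathbb{Q}$), taking $\E^{\mathbb{Q}}$, using $\phi_\epsilon(\tilde Y_T)\le|v(X_T)|\le\norm{v}_{\linf}$, then letting $T\to\infty$ and $\epsilon\to0$ yields $|v(x)|\le M'/\delta$; since $x$ is arbitrary and $v$ is continuous, $\norm{v}_{\linf}\le M'/\delta$.

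The main obstacle is conceptual rather than computational. The heuristic of sub-section~\ref{subsec:formal} is genuinely one-dimensional, and in higher dimensions one must both (a) treat the equations for the $\partial_{x_i}u$ as a coupled system, so that the quadratic form $w^{T}Db\,w\le-\delta|w|^2$ coming from \eqref{dissipative} can be used at all, and (b) carry out the Girsanov change of measure above --- without it the term $\langle\nabla\phi_\epsilon(\tilde Y_s),\partial_z f(X_s,v(X_s))\,\tilde Z_s\rangle$ cannot be absorbed (the second-order term $\tfrac12\mathrm{tr}(\nabla^2\phi_\epsilon\,\tilde Z_s\tilde Z_s^{T})$ is of the wrong size), and no comparison principle is available for the system as in \cite{BRIAND1998455}. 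The elliptic bootstrap and the localization step are routine but require care, since $\tilde Z=Dv$ is only locally, not globally, bounded.
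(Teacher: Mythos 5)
Your proposal is correct and follows essentially the same route as the paper's proof: elliptic bootstrap to justify differentiating \eqref{pde}, viewing the equations for $\nabla u$ as a coupled system, the Girsanov change of measure with $\theta=\partial_z f(\cdot,v)$ to remove the $\tilde Z$-dependence, and then a monotone infinite-horizon BSDE estimate. The only (cosmetic) difference is the final step, where you apply \Itos formula to the smooth convex approximation $\phi_\epsilon(y)=\sqrt{|y|^2+\epsilon^2}-\epsilon$ of $|y|$ in the style of \cite{BRIAND1998455}, whereas the paper applies it to $|Y|^2$ and uses Young's inequality; both yield the same bound $M'/\delta$.
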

\begin{proof}
By Lemma \ref{prop:regularityebsde}, we find that $u$ is a $C^3$ solution to \eqref{pde}, and that we may assume $v = \nabla u$ is $C^2$. Thus, we can differentiate \eqref{pde} to find an equation for $v$. Namely, writing $v^{i} = \partial_{i} u$, we obtain the PDE system 
\begin{align} \label{gradeqn}
    \frac{1}{2} \Delta v^{i} + \partial_i b \cdot v + b \cdot \nabla v^{i} +  \partial_{x_i}f(x,v) + \partial_z f(x,v) \nabla v^{i} = 0. 
\end{align}
We notice that if we set $X^{x}$ to be the unique solution of the SDE
\begin{align*}
    X_t^x = x + \int_0^t b(X_s^x) ds + W_t,
\end{align*}
then since $v$ is $C^2$, we may apply It\^o's formula and \eqref{gradeqn} to verify that the $\R^d \times \R^{d \times d}$-valued pair of processes $(Y^x,Z^x)$ given by $(Y^x, Z^x) = (v(X^x), Dv(X^x))$ solves the discounted BSDE 
\begin{align*}
    Y_t^{x,i} = Y_T^{x,i} + \int_t^T \big(\partial_i b(X^x_s) \cdot Y^x_s + \partial_{x_i} f(X^x_s,Y^x_s)\big) ds \\ + \int_t^T \partial_z f(X^x_s,Y^x_s) \cdot Z^{x,i}_s ds - \int_t^T Z_s^{x,i} \cdot dW_s. 
\end{align*}
We rewrite the above as
\begin{align} \label{diffbsde}
     Y_t^{x,i} = Y_T^{x,i} + \int_t^T \big(\partial_i b(X^x_s) \cdot Y^x_s + \partial_{x_i} f(X^x_s,Y^x_s)\big) ds - \int_t^T Z_s^{x,i} \cdot d\tilde{W}_s \nonumber \\
     = Y_T^{x,i} + \int_t^T \big(\partial_i b(X^x_s) \cdot Y^x_s  + \partial_{x_i} g(X^x_s,Y^x_s) + \partial_{x_i} h(X^x_s,Y^x_s)\big) ds - \int_t^T Z_s^{x,i} \cdot d\tilde{W}_s
\end{align}
where 
\begin{align*}
\tilde{W} \coloneqq W - \int \partial_z f(X^x,Y^x) dt
\end{align*}
is by Girsanov's Theorem a Brownian motion on $[0,T]$ under the equivalent probability measure $\Q_T$ given by 
\begin{align*}
    d \Q_T = \mathcal{E}(\int \partial_z f(X^x,Y^x) \cdot dW)_T d\pr. 
\end{align*}
Now we apply It\^o's formula together with \eqref{diffbsde} to compute
\begin{align} \label{y2dynamics}
    d|Y_t^x|^2 = \Big(-2(Y_t^x)^T  Db(X^x_t) Y_t - 2(Y^x_t)^T \partial_x g(X_t^x,Y_t^x) \nonumber \\ - 2(Y^x_t)^T \partial_x h(X_t^x,Y_t^x) + |Z_t^x|^2 \Big) dt + (Y_t^{x})^T Z_t^x  d\tilde{W}_t \nonumber \\
    = \Big(-2(Y_t^x)^T  Db(X^x_t) Y_t - 2(Y^x_t)^T \big( \partial_x g(X_t^x,Y_t^x) - \partial_x g(X_t^x,0)\big) \nonumber \\  - 2(Y^x_t)^T \partial_x g(X_t^x,0) -2 (Y^x_t)^T \partial_x h(X_t^x,Y_t^x)+ |Z_t^x|^2 \Big) dt + (Y_t^{x})^T Z_t^x  d\tilde{W}_t.
\end{align}
The monotonicity condition gives 
\begin{align} \label{y2monotone}
    \beta_t \coloneqq -2(Y_t^x)^T  Db(X^x_t) Y_t - 2(Y^x_t)^T \big( \partial_x g(X_t^x,Y_t^x) - \partial_x g(X_t^x, 0)\big) + |Z_t^x|^2 \geq 2\delta |Y_t^x|^2, 
\end{align}
and Young's inequality together with the growth conditions on $\partial_{x} g$ and $\partial_x h$ gives 
\begin{align} \label{y2growth}
    |\gamma_t| \coloneqq |2(Y^x_t)^T \partial_x g(X_t^x,0) + 2(Y^x_t)^T \partial_x h(X_t^x,Y_t^x)| \nonumber \\ \leq 2|Y^x_t| \big(|\partial_x g(X^x_t,0)| + |\partial_x  h(X^x_t,Y^x_t)|\big) \nonumber  \\
    \leq \frac{1}{\delta} \big(|\partial_x g(X^x_t,0)| + |\partial_x  h(X^x_t,Y^x_t)|\big)^2 + \delta|Y^x_t|^2 
    \leq \frac{(M')^2}{\delta} + \delta |Y^x_t|^2. 
\end{align}
Combining \eqref{y2dynamics}, \eqref{y2monotone}, and \eqref{y2growth}, we see that we have 
\begin{align} \label{y2eqn}
    d|Y_t^x|^2 = \big(\delta |Y_t^x|^2 + \alpha_t\big) dt + (Y_t^x)^T Z_t^x d\tilde{W}_t,
\end{align}
where 
\begin{align*}
    \alpha_t = \beta_t + \gamma_t - \delta |Y_t^x|^2 \geq - \frac{(M')^2}{\delta}. 
\end{align*}
Now from \eqref{y2eqn} it is easy to check that 
\begin{align} \label{almostdone}
    |Y_0^x|^2 = \exp(-\delta T) |Y_T^x|^2 - \int_0^T \exp(- \delta s) \alpha_s ds - \int_0^T \exp(-\delta s) (Y_s^x)^T Z_s^x d\tilde{W}_t. 
\end{align}
We already known that $Y^{x}$ is bounded, so from equation \eqref{diffbsde} we see that the $\Q_T$-local martingale $\int Z^x \tilde{W}$ is bounded. It follows that the stochastic integral in \eqref{almostdone} is a true martingale on $[0,T]$ under $\Q_T$, and so
\begin{align*}
     |Y_0^x|^2 = \exp(-\delta T) \E^{\Q_T}[|Y_T^x|^2] + \E^{\Q_T}[\int_0^T -\exp(- \delta s) \alpha_s ds] \\
    \leq \exp(-\delta T) \norm{Y}_{\sinf}^2 + \int_0^T \exp(-\delta s) \frac{(M')^2}{\delta} dt \\
    = \exp(-\delta T) \norm{Y}_{\sinf}^2 + \frac{(M')^2}{\delta^2}\big(1 - \exp(-\delta T)), 
\end{align*}
where $\E^{\Q_T}$ denotes expectation under $\Q_T$. Finally, sending $T \to \infty$ gives the estimate 
\begin{align*}
    |v(x)|^2 = |Y_0^x|^2 \leq \frac{(M')^2}{\delta^2}, 
\end{align*}
i.e. $\norm{v}_{L^{\infty}} \leq \frac{M'}{\delta}$. 
\end{proof}

\begin{remark} \label{rmk:weakmonotone}
A careful reading of the proof of Lemma \ref{lem:apriori} shows that we do not in fact need the full monotonicity condition \eqref{monotone} for the conclusion of the lemma to hold, but only the weaker condition 
\begin{align} \label{weakmonotone}
    \big(\partial_x g(x,z) - \partial_x g(x,0)\big) \cdot z \leq 0
\end{align}
for all $x, z \in \R^d$. 
\end{remark}
Finally, we can complete the proof of Theorem \ref{thm:main}.

\begin{proof}[Proof of Theorem \ref{thm:main}] 
We split the argument into several steps. 
\newline \newline
\textit{Step 1 (checking that mollification preserves structural properties)}: Let $(\rho_{\epsilon})_{0 <\epsilon < 1}$ be a standard approximation to the identity on $\R^d$ (in particular satisfying $\supp(\rho_{\epsilon}) \subset B_1$ for all $\epsilon)$, and define
\begin{itemize}
    \item $b_{\epsilon}(x) = b * \rho_{\epsilon}(x) = \int_{\R^d} b(x - y) \rho_{\epsilon}(y) dy$
    \item $g_{\epsilon}(x,z) = \int_{\R^d} \int_{\R^d} g(x - y_1, z - y_2) \rho_{\epsilon}(y_1) \rho_{\epsilon}(y_2) dy_1 dy_2$, 
    \item $h_{\epsilon}(x,z) = \int_{\R^d} \int_{\R^d} h(x - y_1, z - y_2) \rho_{\epsilon}(y_1) \rho_{\epsilon}(y_2) dy_1 dy_2$. 
    \item $f_{\epsilon} = g_{\epsilon} + h_{\epsilon}$. 
\end{itemize}
Then $b_{\epsilon}$ and $f_{\epsilon}$ are smooth and, moreover we have 
\begin{align} \label{check1}
   \big(b_{\epsilon} (x) - b_{\epsilon}(\bx)\big) \cdot (x - \bx) = \big(\int_{\R^d} \big[ b(x-y) - b(\bx - y) \big]\rho_{\epsilon}(y) dy\big) \cdot (x - \bx) \nonumber \\ = \int_{\R^d} \big( \big[b(x-y) - b(\bx - y)\big] \rho_{\epsilon}(y)\big) \cdot (x - y - \bx + y) dy \leq - \delta |x - \bx|^2,
\end{align}
i.e. $b_{\epsilon}$ satisfies the dissipativity condition \eqref{dissipative} uniformly in $\epsilon$. Similar computations show that  
\begin{align} \label{check2}
    \big(\partial_x g_{\epsilon}(x,z) - \partial_x g_{\epsilon}(x,\bz)\big) \cdot (z - \bz) \leq 0,
\end{align}
and also
\begin{align*}
    \sup_{x \in \R^d} |\partial_x g_{\epsilon}(x,0)| \leq \norm{\partial_x g}_{\linf(\R^d \times B_1)}, \nonumber \\
    \norm{\partial_x h_{\epsilon}}_{\linf(\R^d \times \R^d)} \leq \norm{\partial_x h}_{\linf(\R^d \times \R^d}),
\end{align*}
so that in particular 
\begin{align} \label{check3}
    \sup_{x \in \R^d} |\partial_x g_{\epsilon}(x,0)| +
    \norm{\partial_x h_{\epsilon}}_{\linf(\R^d \times \R^d)} \leq M. 
\end{align}
\textit{Step 2 (estimates for mollified and truncated equation)}:
We set $\tilde{g}_{\epsilon}(x,z) = g_{\epsilon}(x,\pi(z))$, $\tilde{h}_{\epsilon}(x,z) = h_{\epsilon}(x,\pi(z))$, and $\tilde{f}_{\epsilon} = \tilde{g}_{\epsilon} + \tilde{h}_{\epsilon}$,
where $\pi$ is as introduced in \eqref{pidef}
and notice that thanks to \eqref{check3}
\begin{align} \label{check4}
    \big(\partial_x \tilde{g}_{\epsilon}(x,z) - \partial_x \tilde{g}_{\epsilon}(x,0)\big) \cdot z  = \rho(|z|)\big(\partial_x \rho(|z|) g_{\epsilon}(x,\pi(z)) - \partial_x \tilde{g}_{\epsilon}(x,0)\big) \cdot \pi(z) \leq 0. 
\end{align}
That is, $\tilde{g}_{\epsilon}$ satisfies the weak monotonicity condition \eqref{weakmonotone}. 
Since $g_{\epsilon}$ and $h_{\epsilon}$ are smooth, so are $\tilde{g}_{\epsilon}$ and $\tilde{h}_{\epsilon}$, and from \eqref{check3} we see that 
\begin{align} \label{check5}
    \sup_{x \in \R^d} |\partial_x \tilde{g}_{\epsilon}(x,0)| + \norm{\partial_x \tilde{h}_{\epsilon}}_{\linf(\R^d \times \R^d)} \leq M. 
\end{align}
Using \eqref{check1}, \eqref{check4}, and \eqref{check5}, we may apply Lemma \ref{lem:apriori} (actually a slight extension, see Remark \ref{rmk:weakmonotone}) to conclude that 
\begin{align} \label{convest}
    \norm{v^{\epsilon}}_{\linf} \leq \frac{M}{\delta}, 
\end{align}
where $(u^{\epsilon}, v^{\epsilon},\lambda^{\epsilon})$ is the unique Markovian solution to \begin{align} \label{approxebsde}
    dY_t^{\epsilon} = \lambda^{\epsilon} dt - \tilde{f}_{\epsilon}(X_t,Z^{\epsilon}_t) dt + Z_t^{\epsilon} \cdot dW_t. 
\end{align}
\textit{Step 3 (Passing the a-priori estimate through the limit)}: 
Recalling that $u^{\epsilon}$ is unique only up to an additive constant, let us normalize $u^{\epsilon}$ so that $u^{\epsilon}(0) = 0$. By Proposition \ref{prop:regularityebsde},  $u^{\epsilon}$ actually is a $C^2$ solution of the the corresponding PDE and $v^{\epsilon} = \nabla u^{\epsilon}$ a.e., thus elliptic regularity arguments (very similar to those presented in the proof of Proposition \ref{prop:regularityebsde} in the appendix) show that for some $0 < \gamma < 1$ and for each $R > 0$, $u^{\epsilon}$ is uniformly bounded in $C^{2,\gamma}(B_R)$, and that moreover for some $\epsilon_n \downarrow 0$, we have
\begin{align*}
    \norm{u^{\epsilon_n} - u}_{C^{2,\gamma}(B_R)} \to 0 \text{ as } n\to \infty, 
\end{align*}
for each $R > 0$, where $(u,v,\lambda)$ is the unique Markovian solution of the truncated EBSDE \eqref{ebsdetrunc} (also normalized so that $u(0) = 0$). In particular, $v^{\epsilon} = \nabla u^{\epsilon_n} \to \nabla u = v$ almost everywhere, 
and so we get $\norm{v}_{\linf} \leq \frac{M}{\delta}$. Thus, the unique Markovian solution $(u,v,\lambda)$ of \eqref{ebsdetrunc} is also a Markovian solution of \eqref{ebsde}, as desired. As explained in Remark \ref{rmk:uniqueness}, uniqueness follows from uniqueness for Lipschitz drivers, and the fact that $u$ is a regular solution of \eqref{pde} is established in Proposition \ref{prop:regularityebsde}. 
\end{proof}

\section{Applications to control} \label{sec:control}
In this section, we apply Theorem \ref{thm:main} to an ergodic control problem. We do not pursue the greatest possible generality, because we are primarily interested in answering the question - what does the monotonicity condition \eqref{monotone} mean in terms of ergodic control? In particular, we focus in both cases on the case of linear drift, convex costs, and sufficiently smooth data, since this is the setting where the monotonicity condition  \eqref{monotone} can most easily be understood. 

\subsection{Ergodic control} \label{subsec:ergodiccontrol}

In this sub-section, we recall the basic connection between ergodic control and the EBSDE \eqref{ebsde}, and then we check which conditions on the data of the ergodic control problem guarantee that the corresponding EBSDE satisfies Assumption \ref{assump:existence}. 

\subsubsection{Set-up and connection to ergodic BSDE}

The data will be a cost function $r = r(x,a) : \R^d \times \R^d \to \R$ and a vector field $b: \R^d \to \R^d$. 
\begin{assumption}[basic assumptions for control problem] \label{assump:ergcontrol}
The functions $b : \R^d \to \R^d$ and $r : \R^d \times \R^d \to \R$ satisfy 
\begin{enumerate}
    \item (dissipativity and regularity for the drift) $b$ satisfies the first condition in Assumption \ref{assump:existence}, i.e. $b$ is Lipschitz and satisfies the dissipativity condition \eqref{dissipative}. 
    \item (local boundedness of the cost) $r : \R^d \times \R^d \to \R$ satisfies the estimate
    \begin{align} \label{quadgrowth}
        |r(x,a)| \leq \kappa(|a|)
    \end{align}
    for some increasing function $\kappa : \R_+ \to \R_+$ and all $x,a \in \R^d$.
    \item (convexity of the cost) $r$ is uniformly convex in $a$.
\end{enumerate}
\end{assumption}
For each $x \in \R^d$, we denote by $X^x$ the solution of the SDE \begin{align*}
    X_t^x = x + \int_0^t b(X_s^x) ds + W_t. 
\end{align*}
We define $\sA$ to be the set of bounded and progressively measurable processes taking values in $\R^d$. 
For any $\alpha \in \sA$, we can write
\begin{align*}
    X_t^x = \big(b(X_t^x) + \alpha_t\big) dt + dW_t^{\alpha},
\end{align*}
where $W^{\alpha} = W- \int \alpha dt$ is a Brownian motion on the interval $[0,T]$ under the probability measure $\pr^{\alpha,T}$, given by 
\begin{align*}
    d\pr^{\alpha,T} = \mathcal{E}(\int \alpha  \cdot dW)_T d\pr. 
\end{align*}
Now we define for each $x \in \R^d$ the cost functional $J_x : \sA \to \R$, given by
\begin{align} \label{costfunctional}
    J_x(\alpha) = \limsup_{T \to \infty} \frac{1}{T} \E^{\pr^{\alpha,T}}[\int_0^T r(X_t^x,\alpha_t) dt], 
\end{align}
which is well-defined under Assumption \ref{assump:ergcontrol}.
We recall the relevant Hamiltonian 
\begin{align} \label{hamiltonian}
    H(x,z) = \inf_{a \in \R^d} h(x,z,a), \nonumber \\
    h(x,z,a) \coloneqq a \cdot z + r(x,a). 
\end{align}
Since $r$ is uniformly convex, there is an optimizer $\hat{a}$ of $h$, i.e. a measurable map $\hat{a}(x,z): \R^d \times \R^d \to \R^d$ such that 
\begin{align*}
    h(x,z, \hat{a}(x,z)) = H(x,z). 
\end{align*}
We pose the ergodic BSDE 
\begin{align} \label{ebsdecontrol}
    dY_t = \lambda dt - H(X_t,Z_t) dt + Z_t \cdot dW_t.
\end{align}
We now state without proof the connection between the control problem and an ergodic BSDE, which is just a slight adaptation of Theorem 7.1 in \cite{Fuhrman2009ErgodicBA} to our setting. 
\begin{prop} \label{prop:ebsdeoptimality}
Suppose that the ergodic BSDE \eqref{ebsdecontrol} has a Markovian solution $(u,v,\lambda)$ such that $u$ is of linear growth and $v$ is bounded, and that $x \mapsto \hat{a}(x,v(x))$ is bounded. Then, for each $x \in \R^d$, we have 
\begin{align*}
    \inf_{\alpha \in \sA} J_x(\alpha) = \lambda = J_x(\alpha^*), 
\end{align*}
where $\alpha^*  = \hat{a}(X^x, v(X^x)) \in \sA$. 
\end{prop}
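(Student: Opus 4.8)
The plan is to verify both inequalities in the chain $\inf_{\alpha \in \sA} J_x(\alpha) = \lambda = J_x(\alpha^*)$ by exploiting the BSDE representation of the value function together with the defining property of the Hamiltonian. First I would fix $x \in \R^d$ and an arbitrary admissible control $\alpha \in \sA$. Using the Markovian solution $(u,v,\lambda)$, I would write down the dynamics of $Y_t^x = u(X_t^x)$ on the interval $[0,T]$ under the reference measure $\pr$, namely $dY_t^x = \lambda\, dt - H(X_t^x, Z_t^x)\, dt + Z_t^x \cdot dW_t$ with $Z_t^x = v(X_t^x)$, and then change to the measure $\pr^{\alpha,T}$ under which $W^\alpha = W - \int \alpha\, dt$ is a Brownian motion. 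This turns the dynamics into $dY_t^x = \lambda\, dt - H(X_t^x, Z_t^x)\, dt + Z_t^x \cdot \alpha_t\, dt + Z_t^x \cdot dW_t^\alpha$. Integrating from $0$ to $T$, taking $\E^{\pr^{\alpha,T}}$ (the stochastic integral is a true martingale because $v$ is bounded and $\alpha$ is bounded, hence $Z^x$ is bounded and the density is well-behaved on $[0,T]$), and rearranging, I would obtain
\begin{align*}
    u(x) = \E^{\pr^{\alpha,T}}[u(X_T^x)] - \lambda T + \E^{\pr^{\alpha,T}}\Big[\int_0^T \big( H(X_t^x, Z_t^x) - Z_t^x \cdot \alpha_t \big)\, dt\Big].
\end{align*}

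Next I would use the key inequality $H(x,z) \leq z \cdot a + r(x,a)$ for all $a$, which is immediate from the definition $H(x,z) = \inf_a (a \cdot z + r(x,a))$. Applying this pointwise with $a = \alpha_t$ gives $H(X_t^x, Z_t^x) - Z_t^x \cdot \alpha_t \leq r(X_t^x, \alpha_t)$, so that
\begin{align*}
    u(x) \leq \E^{\pr^{\alpha,T}}[u(X_T^x)] - \lambda T + \E^{\pr^{\alpha,T}}\Big[\int_0^T r(X_t^x, \alpha_t)\, dt\Big].
\end{align*}
Dividing by $T$, rearranging, and taking $\limsup_{T \to \infty}$, the term $\frac{1}{T} u(x) \to 0$, and the term $\frac{1}{T}\E^{\pr^{\alpha,T}}[u(X_T^x)] \to 0$ — here I would invoke the linear growth of $u$ together with a uniform-in-$T$ moment bound on $X_T^x$ under $\pr^{\alpha,T}$ (available since $b$ is dissipative and $\alpha$ is bounded, giving $\E^{\pr^{\alpha,T}}[|X_T^x|] = O(1)$, or at worst $o(T)$). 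This yields $\lambda \leq J_x(\alpha)$, and since $\alpha$ was arbitrary, $\lambda \leq \inf_{\alpha \in \sA} J_x(\alpha)$.

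For the reverse inequality, I would repeat the computation with the specific choice $\alpha^* = \hat{a}(X^x, v(X^x))$, which is admissible by the hypothesis that $x \mapsto \hat{a}(x,v(x))$ is bounded. For this choice the inequality $H(X_t^x, Z_t^x) - Z_t^x \cdot \alpha_t^* \leq r(X_t^x, \alpha_t^*)$ becomes an equality by the very definition of the optimizer $\hat{a}$, namely $h(X_t^x, Z_t^x, \hat{a}(X_t^x, Z_t^x)) = H(X_t^x, Z_t^x)$. Hence the chain above becomes an identity, and after dividing by $T$ and taking $\limsup$ I would get $\lambda = J_x(\alpha^*)$. Combining with $\lambda \leq \inf_\alpha J_x(\alpha) \leq J_x(\alpha^*) = \lambda$ forces equality throughout, which is the claim.

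I expect the main obstacle to be the justification of the limit $\frac{1}{T}\E^{\pr^{\alpha,T}}[u(X_T^x)] \to 0$ uniformly enough to survive the $\limsup$: one needs a moment estimate for $X_T^x$ under the controlled measure $\pr^{\alpha,T}$ that does not blow up with $T$, which relies on combining the dissipativity of $b$ with the boundedness of the control (a Gronwall-type argument on $\E^{\pr^{\alpha,T}}[|X_T^x|^2]$ using $(b(y)+\alpha)\cdot y \leq -\delta|y|^2 + |b(0)||y| + |\alpha||y|$). The other technical point — that the stochastic integral $\int_0^T Z_s^x \cdot dW_s^\alpha$ has zero $\pr^{\alpha,T}$-expectation — is routine since on each finite horizon $[0,T]$ the density $\mathcal{E}(\int \alpha \cdot dW)_T$ has moments of all orders (boundedness of $\alpha$) and $Z^x = v(X^x)$ is bounded. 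Everything else is a direct manipulation of the BSDE and the pointwise Hamiltonian inequality, so it should not present difficulties; indeed this is precisely the adaptation of Theorem 5.1 of \cite{Fuhrman2009ErgodicBA} referenced in the statement.
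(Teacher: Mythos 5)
Your argument is correct and is precisely the standard verification argument (Girsanov change of measure, the pointwise inequality $H(x,z)\leq a\cdot z+r(x,a)$ with equality at $\hat a$, then divide by $T$ and use linear growth of $u$ plus a uniform moment bound under $\pr^{\alpha,T}$) that the paper invokes by citing Theorem 5.1 of \cite{Fuhrman2009ErgodicBA} and itself carries out in detail for the strong-formulation analogue in sub-section \ref{subsec:weakstrong}. No gaps; the two technical points you flag (the martingale property of $\int Z^x\cdot dW^\alpha$ and the $o(T)$ bound on $\E^{\pr^{\alpha,T}}[|X_T^x|]$ via dissipativity and boundedness of $\alpha$) are exactly the right ones and are handled correctly.
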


\subsubsection{Conditions for existence of an optimal control}

Now we establish conditions on the running cost $r$ under which the ergodic BSDE \eqref{ebsdecontrol} is covered by Theorem \ref{thm:main}. The main difficulty is to check what conditions on $r$ will force the driver $H(x,z) = h(x,z,\hat{a}(x,z))$ to satisfy the monotonicity condition 
\begin{align} \label{ergmonotone}
    \big(\partial_x H(x,z) - \partial_x H(x,\bz)\big) \cdot (z - \bz) \leq 0.
\end{align}

We start with a Lemma, which is proved by differentiating the optimality condition
\begin{align} \label{optcondition}
    z^i + \partial_a r^i(x, \hat{a}(x,z)) = 0. 
\end{align}
implicitly in $z$.
\begin{lem} \label{lem:fderivcomp}
If $r$ is $C^2$, then
\begin{align*}
    \partial_z \partial_x H(x,z) = - \partial_{a} \partial_x r(x, \hat{a}(x,z)) (\partial_a^2 r(x,\hat{a}(x,z))^{-1}.
\end{align*} In particular, by Proposition \ref{prop:dissipative}, $H$ satisfies the monotonicity condition \eqref{monotone} whenever $\partial_a \partial_x r(x,a)\partial_a^2 r(x,a))^{-1} $ is non-negative, i.e.
\begin{align}
    z^T\partial_a \partial_x r(x,a)\partial_a^2 r(x,z)^{-1} z \geq 0
\end{align}
holds for all $x,z,a \in \R^d$. 
\end{lem}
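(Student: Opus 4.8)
The plan is to compute $\partial_z \partial_x H$ directly by differentiating the first-order optimality condition \eqref{optcondition} with respect to $z$, and then invoke Proposition \ref{prop:dissipative} to translate the resulting matrix inequality into the desired monotonicity statement. Since $r$ is $C^2$ and uniformly convex in $a$, the map $\hat a(x,z)$ is well-defined, and by the implicit function theorem it is $C^1$ with $\partial_a^2 r(x,\hat a(x,z))$ invertible; so all the objects below make sense.

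First I would record that, by definition, $H(x,z) = h(x,z,\hat a(x,z)) = \hat a(x,z)\cdot z + r(x,\hat a(x,z))$. Differentiating in $x_i$ and using the envelope theorem (i.e.\ the fact that the $\partial_a$-derivative terms cancel because of the optimality condition \eqref{optcondition}), one gets $\partial_{x_i} H(x,z) = \partial_{x_i} r(x,\hat a(x,z))$. Then differentiating this in $z_j$ yields
\begin{align*}
    \partial_{z_j}\partial_{x_i} H(x,z) = \sum_k \partial_{a_k}\partial_{x_i} r(x,\hat a(x,z))\, \partial_{z_j} \hat a^k(x,z).
\end{align*}
It remains to identify $\partial_z \hat a$. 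This comes from differentiating the optimality condition $z^i + \partial_{a_i} r(x,\hat a(x,z)) = 0$ with respect to $z_j$: this gives $\delta_{ij} + \sum_k \partial_{a_k}\partial_{a_i} r(x,\hat a)\,\partial_{z_j}\hat a^k = 0$, i.e.\ in matrix form $I + \partial_a^2 r(x,\hat a)\, D_z\hat a = 0$, so $D_z \hat a = -\big(\partial_a^2 r(x,\hat a)\big)^{-1}$ (note this matrix is symmetric). Substituting back gives $\partial_z\partial_x H(x,z) = -\partial_a\partial_x r(x,\hat a(x,z))\big(\partial_a^2 r(x,\hat a(x,z))\big)^{-1}$, which is the claimed formula.

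For the second assertion, apply Proposition \ref{prop:dissipative} to $\phi(z) := \partial_x H(x,z)$ with fixed $x$ (so $D\phi(z) = \partial_z\partial_x H(x,z)$): the monotonicity condition $\big(\partial_x H(x,z) - \partial_x H(x,\bz)\big)\cdot(z-\bz)\le 0$ is equivalent to $w^T \partial_z\partial_x H(x,z) w \le 0$ for all $w$, which by the formula above is exactly $-w^T \partial_a\partial_x r(x,\hat a)\big(\partial_a^2 r(x,\hat a)\big)^{-1} w \le 0$, i.e.\ the non-negativity of $\partial_a\partial_x r(x,a)\big(\partial_a^2 r(x,a)\big)^{-1}$ along the image of $\hat a$. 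Requiring this for all $(x,a)$ (not just those in the image of $\hat a$) gives a clean sufficient condition. The only mild subtlety to address is regularity: one needs $\hat a$ to be $C^1$ to justify the differentiations, which follows from the implicit function theorem under uniform convexity of $r$ in $a$, and one should note that \eqref{optcondition} does indeed characterize the minimizer since $h(x,z,\cdot)$ is strictly convex and coercive. I do not anticipate a serious obstacle here — the argument is a standard envelope-theorem-plus-implicit-function-theorem computation; the main point of care is simply bookkeeping the transpose/symmetry of $\partial_a^2 r$ and its inverse so that the final matrix expression matches the statement.
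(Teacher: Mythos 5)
Your proposal is correct and follows essentially the same route as the paper: the envelope-theorem identity $\partial_x H(x,z)=\partial_x r(x,\hat a(x,z))$, the implicit-function-theorem computation $D_z\hat a=-\big(\partial_a^2 r(x,\hat a)\big)^{-1}$ from the optimality condition \eqref{optcondition}, and then Proposition \ref{prop:dissipative} to convert the sign of $\partial_z\partial_x H$ into the monotonicity condition \eqref{monotone}. Your explicit treatment of the last step (which the paper leaves implicit) is a welcome addition but not a different argument.
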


\begin{remark}
Notice that in the case $d = 1$, the condition that $\partial_a \partial_x r (\partial_a^2 r)^{-1}$ is non-negative is equivalent to the condition that $\partial_a \partial_x r \geq 0$, since we have already assumed that $\partial_x^2 r > 0$. Notice also that in any dimension, the condition is satisfied as soon as $\partial_a \partial_x r = 0$, i.e. if $r$ has a separated structure 
\begin{align*}
    r(x,a) = r_1(x) + r_2(a). 
\end{align*}
\end{remark}

\begin{prop} \label{prop:controlexist}
Suppose that in addition to Assumption \ref{assump:ergcontrol}, $r$ is $C^2$. Suppose further $\partial_a \partial_x r \big( \partial_a^2 r\big)^{-1}$ is non-negative. Finally, assume that $H$ is bounded and Lipschitz on $\R^d \times B_R$ for each $R > 0$. 
Then the drift $b$ and the driver $H$ satisfy Assumption \ref{assump:existence}, and so by Theorem \ref{thm:main} the ergodic BSDE \eqref{ebsdecontrol} has a unique Markovian solution $(u,v,\lambda)$ with $u$ of linear growth and $v$ bounded.
\end{prop}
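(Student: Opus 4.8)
The plan is to verify directly that the pair $(b,H)$ satisfies all four items of Assumption \ref{assump:existence}, using the trivial decomposition $f = g + h$ with $g = H$ and $h \equiv 0$. Item (1) — that $b$ is Lipschitz and dissipative — is exactly the first hypothesis of Assumption \ref{assump:ergcontrol}, so there is nothing to prove. Item (4) is immediate since $h \equiv 0$. For item (2), the local Lipschitz property of $g = H$ on each $\R^d \times B_R$ is assumed outright, and since $H$ is also assumed bounded on $\R^d \times B_1$ we get $\sup_{x \in \R^d}|H(x,0)| \leq \norm{H}_{\linf(\R^d \times B_1)} < \infty$. Hence the only real work is item (3), the monotonicity of $z \mapsto \partial_x H(x,z)$.

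For item (3) I would first set up the regularity of the optimizer. Since $r$ is uniformly convex in $a$, for each $(x,z)$ the map $a \mapsto h(x,z,a)$ has a unique minimizer $\hat a(x,z)$, characterized by the first-order condition \eqref{optcondition}. As $r \in C^2$ and $\partial_a^2 r$ is uniformly positive definite, the implicit function theorem applied to \eqref{optcondition} shows $\hat a$ is $C^1$, and the envelope theorem gives $\partial_x H(x,z) = \partial_x r(x,\hat a(x,z))$ and $\partial_z H(x,z) = \hat a(x,z)$. In particular $z \mapsto \partial_x H(x,z)$ is $C^1$ for each fixed $x$, which is exactly the regularity needed to apply Proposition \ref{prop:dissipative}.

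Then I would combine Lemma \ref{lem:fderivcomp} with Proposition \ref{prop:dissipative}. Lemma \ref{lem:fderivcomp} gives
\[
\partial_z \partial_x H(x,z) = -\,\partial_a \partial_x r\big(x,\hat a(x,z)\big)\big(\partial_a^2 r(x,\hat a(x,z))\big)^{-1},
\]
and the standing hypothesis that $\partial_a \partial_x r(\partial_a^2 r)^{-1}$ is non-negative forces $w^T \partial_z \partial_x H(x,z)\, w \leq 0$ for all $w \in \R^d$. Applying Proposition \ref{prop:dissipative} with $\epsilon = 0$ to the $C^1$ map $\phi(\cdot) = \partial_x H(x,\cdot)$ (for fixed $x$), whose Jacobian is $\partial_z\partial_x H(x,\cdot)$, yields
\[
\big(\partial_x H(x,z) - \partial_x H(x,\bz)\big)\cdot(z - \bz) \leq 0 \quad \text{for all } x,z,\bz \in \R^d,
\]
which is item (3); since $H$ is $C^1$, the classical and weak derivatives coincide, so the ``almost every'' formulation is satisfied as well. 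With all four items of Assumption \ref{assump:existence} in hand, Theorem \ref{thm:main} applies and produces the asserted unique Markovian solution $(u,v,\lambda)$ with $u$ of linear growth and $v$ bounded.

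I expect the only delicate point to be the regularity bookkeeping in the second step: one must be sure that $\hat a$ inherits enough smoothness from $r$ (and that the envelope identity $\partial_x H = \partial_x r(\cdot,\hat a)$ holds) so that $z \mapsto \partial_x H(x,z)$ is genuinely $C^1$ and Proposition \ref{prop:dissipative} may legitimately be invoked to upgrade the pointwise sign condition on $\partial_z\partial_x H$ to the integrated monotonicity inequality. Everything else is routine unwinding of the definitions of $H$, $M$, and the two assumptions.
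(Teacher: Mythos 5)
Your proposal is correct and follows essentially the same route as the paper: the paper also takes the decomposition $g = H$, $h \equiv 0$, obtains the monotonicity condition \eqref{monotone} by combining the formula for $\partial_z\partial_x H$ in Lemma \ref{lem:fderivcomp} with Proposition \ref{prop:dissipative}, and then invokes Theorem \ref{thm:main}. Your extra care about the $C^1$ regularity of $\hat a$ and of $z \mapsto \partial_x H(x,z)$ (needed to legitimately apply Proposition \ref{prop:dissipative}) is a welcome addition, matching the implicit-function-theorem argument the paper uses to prove Lemma \ref{lem:fderivcomp}.
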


\subsection{Risk-sensitive ergodic control} \label{subsec:risk}
Now we apply the reasoning from the previous sub-section to a risk-sensitive ergodic control problem, as introduced in \cite{Fleming1995RiskSensitiveCO}. As in the previous sub-section, the data of the control problem consists of a vector field $b : \R^d \to \R^d$ and a cost function $r : \R^d \times \R^d \to \R$. We will again enforce Assumption \ref{assump:ergcontrol}, and define $\sA$, $\pr^{\alpha,T}$, $H$, $\hat{a}$, etc. in the same way. But this time we fix $\delta > 0$, and define the cost functionals $J_x^{\delta} : \sA \to \R$ by 
\begin{align}
    J_x^{\delta}(\alpha) = \limsup_{T \to \infty} \frac{1}{T} \big( \frac{1}{\delta} \ln \E^{\pr^{\alpha,T}}[\exp(\delta \int_0^T r(X_t^x, \alpha_t)] dt \big).
\end{align}
It turns out that this control problem is related to the ergodic BSDE 
\begin{align} \label{ebsderisk}
    dY_t = \lambda dt - f(X_t, Z_t) dt + Z_t \cdot dW_t,\\
    \text{with } \nonumber
    f(x,z) = H(x,z) + \frac{\delta}{2} |z|^2, 
\end{align}
where the Hamiltonian $H$ is as defined in the previous sub-section. 
We now state the connection between the risk-sensitive control problem and the ergodic BSDE \eqref{ebsderisk}. 
\begin{prop} \label{prop:ebsdeoptimalityrisk}
Suppose that the ergodic BSDE \eqref{ebsderisk} has a Markovian solution $(u,v,\lambda)$ such that $u$ is of linear growth and $v$ is bounded, and that $x \mapsto \hat{a}(x,v(x))$ is bounded. Then, for each $x \in \R^d$, we have 
\begin{align*}
    \inf_{\alpha \in \sA} J_x^{\delta}(\alpha) = \lambda = J_x^{\delta}(\alpha^*), 
\end{align*}
where $\alpha^*  = \hat{a}(X^x, v(X^x)) \in \sA$. 
\end{prop}
This result is a fairly standard generalization of Proposition \ref{prop:ebsdeoptimality}, and the proof of Proposition 4 in \cite{Liang2017RepresentationOH} applies directly to this slightly more general setting. Thus we omit the proof. 
Because the driver $f$ in \eqref{ebsderisk} differs from the driver $H$ appearing in the previous sub-section only by the term $\frac{\delta}{2} |z|^2$, it is clear that $f$ satisfies Assumption \ref{assump:existence} if and only if $H$ does. Thus we get the following analogue of Proposition \ref{prop:controlexist}.

\begin{prop}
Suppose that in addition to Assumption \ref{assump:ergcontrol}, $r$ is $C^2$. Suppose further that $\partial_a \partial_x r \big( \partial_a^2 r\big)^{-1}$ is non-negative. Finally, assume that $H$ is bounded and Lipschitz on $\R^d \times B_R$ for each $R > 0$. 
Then the drift $b$ and the driver $f$ satisfy Assumption \ref{assump:existence}, and so by Theorem \ref{thm:main} the EBSDE \eqref{ebsderisk} has a unique Markovian solution $(u,v,\lambda)$ with $u$ of linear growth and $v$ bounded.
\end{prop}

\subsection{Weak vs. strong formulation} \label{subsec:weakstrong}

The control problems discussed in the previous two subsections are in weak formulation, in the sense that the underlying diffusion $X^x$ is fixed and the controller affects the dynamics of $X^x$ by changing the probability measure. This formulation is often easier to work with than the strong formulation, but it is interesting to note that because we are in the Markovian setting and the controls produced are bounded, we can treat the strong formulation by the same techniques.
\newline \newline
As in the weak formulation, we fix a vector field $b : \R^d \to \R^d$ and a cost function $r : \R^d \times \R^d \to \R$ satisfying Assumption \ref{assump:ergcontrol}. Then we define $\sA^S$ to be the set of all bounded and measurable maps $\alpha = \alpha(t,x) : \R_{+} \times \R^d \to \R^d$, and note that by a classical result which can be traced to Veretennikov (see the introduction of \cite{ZHANG20051805} for a modern account of existence of strong solutions for SDEs with irregular drift), for each $x \in \R^d$ and $\alpha \in \sA^S$, there is a unique solution $X^{x,\alpha}$ to the SDE 
\begin{align*}
    \begin{cases}
    dX_t^{x,\alpha} = \big(b(X_t^{x,\alpha}) + \alpha(t,X_t^{x,\alpha}) \big) dt + dW_t, \\
    X^{x,\alpha} = x. 
    \end{cases}
\end{align*}
For each $x \in \R^d$, we define $J_x^S : \sA^S \to \R$ by 
\begin{align*}
    J_x^S(\alpha) = \limsup_{T \to \infty} \frac{1}{T} \E[\int_0^T r(X_t^{x,\alpha}, \alpha(t,X_t^{x,\alpha}))dt].
\end{align*}
This formulation will be equivalent to the weak formulation in the following sense. 
\begin{prop}
Suppose that the ergodic BSDE \eqref{ebsdecontrol} has a Markovian solution $(u,v,\lambda)$ such that $u$ has linear growth, $v$ is bounded and $\alpha^*(t,x) = \alpha^*(x) \coloneqq \hat{a}(x,v(x))$ is bounded. Then 
\begin{align*}
    \inf_{\alpha \in \sA^{S}} J_x^S(\alpha) = J_x^S(\alpha^*) = \lambda. 
\end{align*}
In particular, $\inf_{\alpha \in \sA^{S}} J_x^S(\alpha) = \inf_{\alpha \in \sA} J_{x}(\alpha)$. 
\end{prop}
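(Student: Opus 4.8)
The plan is to prove the equality of weak and strong values by sandwiching. Since every $\alpha \in \sAs$ gives rise (via $\alpha_t \coloneqq \alpha(t,X_t^{x,\alpha})$) to an admissible weak control — indeed $\alpha_t$ is bounded and progressively measurable, and under the measure $\pr^{\alpha,T}$ the process $X^x$ has the same law as $X^{x,\alpha}$ under $\pr$, so that $J_x^S(\alpha) = J_x(\alpha_t)$ — we immediately get $\inf_{\alpha \in \sAs} J_x^S(\alpha) \geq \inf_{\alpha \in \sA} J_x(\alpha) = \lambda$, the last equality being Proposition \ref{prop:ebsdeoptimality}. For the reverse inequality it suffices to exhibit a single $\alpha \in \sAs$ achieving the value $\lambda$, and the natural candidate is the feedback control $\alpha^*(t,x) = \hat a(x,v(x))$, which is admissible by hypothesis (bounded and measurable). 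So the real content is to show $J_x^S(\alpha^*) = \lambda$.

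To handle $J_x^S(\alpha^*)$, first I would invoke the classical Veretennikov-type result cited in the excerpt to get the unique strong solution $X^{x,\alpha^*}$ of $dX_t = (b(X_t) + \alpha^*(X_t))\,dt + dW_t$. The key step is then to run the argument of Proposition \ref{prop:ebsdeoptimality} pathwise along this specific diffusion rather than changing measure. Concretely, apply \Itos formula to $u(X_t^{x,\alpha^*})$ using that $u$ is a classical solution of the PDE \eqref{pde} with driver $H$ (this is part of Theorem \ref{thm:main}, whose hypotheses are in force since the EBSDE \eqref{ebsdecontrol} has the assumed Markovian solution). This yields, for $0 \le t \le T$,
\begin{align*}
    u(X_T^{x,\alpha^*}) = u(x) + \int_0^T \Big( \tfrac12 \Delta u + b\cdot\nabla u + \alpha^*(X_t^{x,\alpha^*})\cdot v(X_t^{x,\alpha^*}) \Big)\,dt + \int_0^T v(X_t^{x,\alpha^*})\cdot dW_t.
\end{align*}
Using $\tfrac12 \Delta u + b\cdot\nabla u = \lambda - H(x,\nabla u)$ and the defining property $H(x,v(x)) = h(x,v(x),\hat a(x,v(x))) = \alpha^*(x)\cdot v(x) + r(x,\alpha^*(x))$, the drift telescopes to $\lambda - r(X_t^{x,\alpha^*}, \alpha^*(X_t^{x,\alpha^*}))$. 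Since $v$ is bounded the stochastic integral is a true martingale, so taking expectations, dividing by $T$, and sending $T\to\infty$ (the $\tfrac1T u(x)$ and $\tfrac1T \E[u(X_T^{x,\alpha^*})]$ terms vanish because $u$ has linear growth and $X^{x,\alpha^*}$ has uniformly bounded moments thanks to dissipativity of $b$ and boundedness of $\alpha^*$) gives $J_x^S(\alpha^*) = \lambda$.

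For a general $\alpha \in \sAs$ one would in fact repeat the same \Ito computation: now the drift is $\lambda - H(X_t,\nabla u) + \alpha(t,X_t)\cdot v(X_t) \geq \lambda - r(X_t,\alpha(t,X_t))$ by definition of $H$ as an infimum, so after the same limiting procedure $J_x^S(\alpha) \geq \lambda$ directly — this gives the lower bound $\inf_{\alpha\in\sAs} J_x^S(\alpha)\geq\lambda$ without even appealing to the weak-formulation result, and combined with $J_x^S(\alpha^*)=\lambda$ finishes the proof; the final claim $\inf_{\alpha\in\sAs}J_x^S(\alpha) = \inf_{\alpha\in\sA}J_x(\alpha)$ then follows since both sides equal $\lambda$. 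The main obstacle is the limiting step $\tfrac1T\E[u(X_T^{x,\alpha})]\to 0$: one needs a moment bound on $X^{x,\alpha}$ uniform in $T$, which I would obtain from \eqref{dissipative} by a standard Gr\"onwall/\Ito argument on $\E[|X_t^{x,\alpha}|^2]$, using that $\alpha$ is bounded so the extra drift term is controlled; everything else is a routine transcription of the proof of Proposition \ref{prop:ebsdeoptimality} into the strong setting, the point being precisely that boundedness of the optimal feedback and the Markovian structure make the Girsanov change of measure unnecessary.
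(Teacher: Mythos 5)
Your argument reaches the right conclusion, but by a genuinely different route from the paper, and it quietly strengthens the hypotheses. The paper's proof never touches the PDE: it uses only the definition of a Markovian solution to write the BSDE identity along the reference diffusion $X$, rewrites the stochastic integral against $dX$ so that the identity becomes an a.s.\ statement about the path of $X$ alone, and then transfers it to $X^{x,\alpha^*}$ (and, via a further Girsanov change of measure, to $X^{x,\alpha}$) using equivalence of laws; the pointwise comparison $h(x,z,\hat a(x,z))\le h(x,z,a)$ then yields both $J_x^S(\alpha)\ge\lambda$ and $J_x^S(\alpha^*)=\lambda$. Your route instead applies It\^o's formula to $u(X^{x,\alpha})$, which requires $u$ to be a classical $C^2$ solution of the HJB equation. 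That is not among the hypotheses of the Proposition: only Assumption \ref{assump:ergcontrol} is in force in this subsection, and it does not supply the local Lipschitz regularity of $H$ (in particular in $x$) needed to invoke Theorem \ref{thm:main} or Proposition \ref{prop:regularityebsde} — those require Assumption \ref{assump:existence}, which for the driver $H$ is only verified under the extra conditions of Proposition \ref{prop:controlexist}. So as written your proof establishes the statement only under additional regularity assumptions (which do hold in the paper's applications); to prove the Proposition as stated you should replace the It\^o step by the BSDE identity plus the change-of-law argument.

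On the positive side, your explicit treatment of the limit $\tfrac1T\E[u(X_T^{x,\alpha})]\to 0$ via a second-moment bound obtained from the dissipativity of $b$ and boundedness of $\alpha$ addresses a point the paper leaves implicit, and your reduction of $\inf_{\alpha\in\sAs}J_x^S(\alpha)\ge\inf_{\alpha\in\sA}J_x(\alpha)$ to the identification of laws is correct, modulo the small slip that the induced weak control should be $\alpha(t,X_t^{x})$ evaluated along the reference diffusion rather than along $X^{x,\alpha}$.
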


\begin{proof}
Let $(u,v)$ be the Markovian solution to \eqref{ebsdecontrol} which is supposed to exist, and for fixed $x \in \R^d$ set $(Y^x,Z^x) = (u(X^x), v(X^x))$, where $X^x = X^{x,\alpha*}$. We claim that
\begin{align} \label{markov}
    Y_t^x = Y_T^x - \lambda (T-t) + \int_t^T r(X^x_s,\hat{a}(X^x_s,Z^x_s)) ds - \int_t^T Z^x_s \cdot dW_s. 
\end{align}
Indeed, the fact that $(u,v)$ is a Markovian solution for \eqref{ebsdecontrol} implies that if $X = X^0$, and $(Y,Z) = (u(X),v(X))$ then we have
\begin{align*}
    Y_t = Y_T - \lambda(T -t) + \int_t^T h(X_s,Z_s,\hat{a}(X_s,Z_s)) ds - \int_t^T Z_s \cdot dW_s \\
    = Y_T - \lambda(T-t) + \int_t^T  \Big(h(X_s,Z_s,\hat{a}(X_s,Z_s)) + b(X_s) \cdot v(X_s)\Big)  ds - \int_t^T Z_s \cdot dX_s,
\end{align*}
i.e. 
\begin{align*}
    u(X_t) = u(X_T) - \lambda(T-t) + \int_t^T \Big(h(X_s,v(X_s),\hat{a}(X_s,v(X_s))) + b(X_s) \cdot v(X_t)) \Big) ds \\ - \int_t^T v(X_s) \cdot dX_s. 
\end{align*}
But for each $T > 0$ there is an equivalent probability measure $\Q^T$ under which the law of $X^x$ on $[0,T]$ is equal to the law of $X$ under $\pr$, and so the equation
\begin{align} \label{comcomp}
    u(X_t^x) = u(X_T^x) - \lambda(T-t) + \int_t^T 
    \Big(h(X_s^x,v(X_s^x),\hat{a}(X_s^x,v(X_s^x))) + b(X_s^x) \cdot v(X_s^x) \
    \Big) ds \nonumber \\ - \int_t^T v(X_s^x) \cdot dX_s^x
\end{align}
holds under $\Q^T$, hence also under $\pr$. Combining \eqref{comcomp} with the identities
\begin{align*}
    dX^x_s = \big(b(X^x_s) + \hat{a}(X^x_s, v(X^x_s))\big) + dW_s, \\
    h(X_s^x,v(X_s^x),\hat{a}(X_s^x,v(X_s^x))) = \hat{a}(X_s^x,v(X_s^x)) \cdot v(X_s^x) + r(X_s^x,\hat{a}(X_s^x,v(X_s^x))), 
\end{align*}
we arrive at 
\begin{align*}
    u(X_t^x) = u(X_T^x) - \lambda(T-t) + \int_t^T 
    r(X_s^x,\hat{a}(X_s^x,v(X_s^x))) ds - \int_t^T v(X_s^x) \cdot dW_s.
\end{align*}
Recalling that $(Y^x,Z^x) = (u(X^x),v(X^x))$, we get \eqref{markov}.  From \eqref{markov}, we in turn get
\begin{align*}
    \lambda = \frac{1}{T} \Big(Y_T^x - Y_0^x + \int_0^T r(X_s^x, \hat{a}(X_s^x,Z^x_s)) dt - \int_0^T Z_s^x \cdot dW_s \Big)
\end{align*}
For any fixed $\alpha \in \sA^S$, we can further manipulate the expression for $\lambda$ to read 
\begin{align} \label{lambda}
    \lambda = \frac{1}{T} \Big(Y_T^x - Y_0^x + \int_0^T \big( h(X_s^x,Z_s^x,\hat{a}(X_s^x, Z_s^x)) - h(X_s^x,Z_s^x,\alpha(s,X_s^x)) ds \nonumber  \\ + \int_0^T r(X_s^x,\alpha(s,X_s^x)) ds - \int_0^T Z_s^x \cdot dW_s^{\alpha} \Big), 
\end{align}
where $W^{\alpha} = W - \int \alpha(\cdot,X^x) dt + \int \alpha^*(X^x) dt$ is a Brownian motion on $[0,T]$ under $\pr^{\alpha,T}$, where $d\pr^{\alpha,T} = \mathcal{E}(\int \big[ \alpha(\cdot,X^x) - \alpha^*(X^x) \big] \cdot dW)_T$. We note that we have 
\begin{align*}
    dX_t^{x} = \big(b(X_t) + \alpha(t,X_t)\big) dt + dW^{\alpha}_t
\end{align*}
under $\pr^{\alpha,T}$, i.e. $X^x$ is a weak solution of the same SDE which defines $X^{x,\alpha}$. By uniqueness in law for this equation, we deduce that the law of $X^{x}$ under $\pr^{\alpha,T}$ is the same as that of $X^{x,\alpha}$ under $\pr$, and in particular from \eqref{lambda} we get 
\begin{align*}
    \lambda = \frac{1}{T} \E^{\pr^{\alpha, T}}[Y_T^x - Y_0^x + \int_0^T \big( h(X_s^x,Z_s^x,\hat{a}(X_s^x, Z_s^x)) - h(X_s^x,Z_s^x,\alpha(s,X_s^x)) \big) ds ] \\
    + \frac{1}{T}\E[\int_0^T r(X_s^{x,\alpha},\alpha(s,X_s^{x,\alpha})) ds ]. 
\end{align*}
Since $\big( h(X_s^x,Z_s^x,\hat{a}(X_s^x, Z_s^x)) - h(X_s^x,Z_s^x,\alpha(s,X_s^x)) \big) \leq 0$, we can take $T \to \infty$ to get $\lambda \leq J_x^S(\alpha)$. Similar arguments show that $\lambda = J_x^S(\alpha^*)$, and this completes the proof. 
\end{proof}

\section{Equation from forward performance processes} \label{sec:forward}
In this section, we consider the driver 
\begin{align} \label{forwarddriver}
    f(x,z) = \frac{1}{2} \delta^2 \text{dist}^2(\Pi, \frac{\theta(x) + z}{\delta}) - z^T \theta(x) + \frac{1}{2} |\theta(x)|^2, 
\end{align}
where $\Pi$ is a convex and closed subset of $\R^d$, and $\theta : \R^d \to \R^d$. This driver is used in Section 4 of \cite{Liang2017RepresentationOH} to construct forward performance processes in factor form. In their context, the diffusion $X$ is a stochastic factor process, and $\theta = \theta(x)$ is the market price of risk corresponding to the vector of stock prices $S$, which solves 
\begin{align*}
     dS_t^i = S_t^i \big( \eta^i(X_t) dt + \sum_{j = 1}^d \sigma^{ij}(X_t) dW_t^j \big)
\end{align*}
for $\eta$ and $\sigma$ satisfying appropriate conditions. That is, $
    \theta  = \sigma^T [\sigma \sigma^T]^{-1} \eta.$
    \newline \newline
Because we are primarily interested in interpreting our monotonicity condition \eqref{monotone} in this setting, we refer to \cite{Liang2017RepresentationOH} for more details on forward performance processes and their connection to the driver \eqref{forwarddriver}. Here, we simply take the market price of risk $\theta : \R^d \to \R^d$ and the convex closed set $\Pi \subset \R^d$ as given, and we focus on checking what conditions on $\theta$ and $\Pi$ will guarantee that $f$ satisfies the hypotheses of Theorem \ref{thm:main}. We will see that in a few simple cases the monotonicity condition \eqref{monotone} boils down to a monotonicity condition on $\theta$. As usual, we fix a vector field $b : \R^d \to \R^d$ which satisfies the first condition in Assumption \ref{assump:existence}. 

Let us suppose that $h = 0$ in the decomposition $f = g+ h$. Then we need to check under what conditions on $\Pi$ and $\theta$ we have the monotonicity condition 
\begin{align} \label{forwardmonotone}
    \big(\partial_x f(x,z) - \partial_x f(x,\bz)\big) \cdot (z - \bz) \leq 0. 
\end{align}
First, we recall that since $\Pi$ is closed and convex, there is a well-defined projection $P_{\Pi} : \R^d \to \Pi$, which maps $x \in \Pi^c$ to the unique element of $\text{argmin}_{p \in \Pi} |x - p|$. Moreover, $x \mapsto \text{dist}(\Pi,x)$ is Lipchitz, and its weak gradient $\nabla \text{dist}(\Pi,\cdot)$ is given by
\begin{align*}
    \nabla \text{dist}(\Pi,\cdot)(x) = \begin{cases}
    \frac{x - \Pi(x)}{|x - \Pi(x)|} & x \in \Pi^c \\
    0 & x \in \Pi. 
    \end{cases}
\end{align*}
Using this, we find that
\begin{align*}
    \nabla \text{dist}^2(\Pi,\cdot)(x) = \begin{cases}
    2 (x - P_{\Pi}(x)) & x \in \Pi^c, \\
    0 & x \in \Pi.
    \end{cases}
\end{align*}
Now we can compute $\partial_x f$ where $f$ is the driver \eqref{forwarddriver}. We find that 
\begin{align*}
    [\partial_x f(x,z)]^T = \delta \big(\frac{\theta(x) + z}{\delta} - P_{\Pi}(\frac{\theta(x) + z}{\delta})\big)D \theta(x) - z^TD \theta(x) + \theta(x)^T D \theta(x) \\
    = - \delta P_{\Pi}(\frac{\theta(x) + z}{\delta}) D \theta(x) + 2\theta(x)^T D\theta(x). 
\end{align*}
Using this we check that the condition \eqref{forwardmonotone} is equivalent to the condition 
\begin{align} \label{equivmonotone}
    \Big(P_{\Pi}\big( \frac{\theta(x) + z}{\delta}\big) - P_{\Pi}\big(\frac{\theta(x) + \bz}{\delta}\big) \Big)D \theta(x) (z - \bz) \geq 0. 
\end{align}
From this, we easily deduce the following Proposition:
\begin{prop} \label{prop:forward}
Suppose that $\theta$ is Lipschitz and bounded, and that the condition \eqref{equivmonotone} holds. Then $b$ and $f$ satisfy Assumption \ref{assump:existence}, and so the ergodic BSDE \eqref{ebsde} admits a unique Markovian solution by Theorem \ref{thm:main}.
\end{prop}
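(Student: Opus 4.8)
The plan is to verify that the data $(b,f)$ satisfies all four conditions of Assumption \ref{assump:existence} with the trivial decomposition $f = g + h$ given by $g = f$ and $h \equiv 0$, and then to invoke Theorem \ref{thm:main}. The first condition holds by hypothesis on $b$, and the fourth is vacuous since $h \equiv 0$, so only the growth/regularity of $g = f$ and the monotonicity of $\partial_x f$ need to be checked.

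For the growth and regularity I would argue as follows. The map $y \mapsto \text{dist}^2(\Pi, y)$ is $C^{1,1}$ on $\R^d$ — this is the standard regularity of the squared distance to a closed convex set, its gradient $2(y - P_{\Pi}(y))$ being $1$-Lipschitz — hence in particular locally Lipschitz. Since $\theta$ is Lipschitz and bounded, the map $(x,z) \mapsto \frac{\theta(x)+z}{\delta}$ sends $\R^d \times B_R$ into a fixed bounded set and is Lipschitz there, so $(x,z) \mapsto \text{dist}^2\big(\Pi, \frac{\theta(x)+z}{\delta}\big)$ is Lipschitz on $\R^d \times B_R$; the remaining terms $-z^T\theta(x)$ and $\frac{1}{2}|\theta(x)|^2$ are Lipschitz on $\R^d \times B_R$ because $\theta$ is bounded and Lipschitz and $|z| \le R$ there. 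This gives the local Lipschitz property. Moreover $f(x,0) = \frac{1}{2}\delta^2 \text{dist}^2\big(\Pi, \frac{\theta(x)}{\delta}\big) + \frac{1}{2}|\theta(x)|^2$, and since $\theta$ is bounded, $\frac{\theta(x)}{\delta}$ ranges over a bounded set, so its distance to the fixed closed set $\Pi$ is bounded and $\sup_x |f(x,0)| < \infty$.

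For the monotonicity of $\partial_x f$ I would use the computation of $[\partial_x f]^T$ already carried out above in this section, which expresses $\partial_x f(x,z)$ through the projection $P_{\Pi}\big(\frac{\theta(x)+z}{\delta}\big)$ and the (a.e.-defined) Jacobian $D\theta$, and from which one reads off
\begin{align*}
\big(\partial_x f(x,z) - \partial_x f(x,\bz)\big)\cdot(z - \bz) = - \delta \Big(P_{\Pi}\big(\tfrac{\theta(x)+z}{\delta}\big) - P_{\Pi}\big(\tfrac{\theta(x)+\bz}{\delta}\big)\Big) D\theta(x)\,(z-\bz).
\end{align*}
Since $\delta > 0$, this makes \eqref{monotone} (for $g = f$) exactly the hypothesis \eqref{equivmonotone}. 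The only mildly delicate point is that, because $\text{dist}^2(\Pi,\cdot)$ is merely $C^{1,1}$ and $\theta$ merely Lipschitz, the formula for $\partial_x f$ — and hence the displayed identity and the equivalence of \eqref{forwardmonotone} with \eqref{equivmonotone} — is to be read in the weak/a.e.\ sense; but that is precisely the sense in which Assumption \ref{assump:existence} requires \eqref{monotone}, and the formula itself is justified by the chain rule for Sobolev functions applied to the composition of the $C^1$ map $\text{dist}^2(\Pi,\cdot)$ with the Lipschitz map $x \mapsto \frac{\theta(x)+z}{\delta}$.

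Having verified the four conditions of Assumption \ref{assump:existence}, existence and (the appropriate) uniqueness of a Markovian solution to \eqref{ebsde} with driver \eqref{forwarddriver} follow at once from Theorem \ref{thm:main}. I do not expect a genuine obstacle here: the regularity and boundedness checks are routine, and the crux is simply the reduction of the monotonicity condition \eqref{monotone} to the assumed \eqref{equivmonotone} via the explicit formula for $\partial_x f$ — the only care needed being the bookkeeping of weak derivatives just described.
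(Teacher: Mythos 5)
Your proposal is correct and follows essentially the same route as the paper: you take the trivial decomposition $g = f$, $h = 0$, verify the growth and local Lipschitz conditions from the boundedness and Lipschitz continuity of $\theta$ together with the $C^{1,1}$ regularity of $\mathrm{dist}^2(\Pi,\cdot)$, and reduce the monotonicity condition \eqref{monotone} to \eqref{equivmonotone} via the same explicit formula $[\partial_x f(x,z)]^T = -\delta P_{\Pi}\big(\tfrac{\theta(x)+z}{\delta}\big)D\theta(x) + 2\theta(x)^T D\theta(x)$ that the paper derives just before the proposition. The only difference is that you spell out the routine regularity checks and the weak-derivative bookkeeping that the paper leaves implicit.
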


\begin{remark}
Some computations reveal that the condition \eqref{equivmonotone} holds (and hence Proposition \ref{prop:forward} applies) if
\begin{itemize}
    \item $d = 1$ and $\theta' \geq 0$, \text{ or }
    \item $\Pi = \R^d$ and $D \theta \geq 0$, in the sense that $z^T D \theta(x) z \geq 0$ for all $x,z \in \R^d$.
\end{itemize}
Thus in these two simple cases, the monotonicity condition \eqref{monotone} on the driver $f$ reduces to a certain monotonicity condition on the market price of risk vector $\theta$. Unfortunately, this does not hold in general, i.e. it is possible to choose $\Pi$ and $\theta$ such that $D \theta$ is non-negative, but \eqref{equivmonotone} is not satisfied.
\end{remark}

\appendix
\section{Some results from elliptic regularity theory}

In this appendix, we establish rigorously the connection between the discounted BSDE 
\begin{align} \label{dbsde}
\begin{cases}
dX_t = b(X_t) dt + dW_t, \\
    dY_t = \rho Y_t dt - f(X_t,Z_t) dt + Z_t \cdot dW_t
    \end{cases}
\end{align}
and the PDE \eqref{dpde}
\begin{align} \label{dpde}
\frac{1}{2} \Delta u^{\rho} + b \cdot \nabla u^{\rho} + f(x, \nabla u^{\rho}) = \rho u^{\rho},
\end{align}
and also the connection between the ergodic BSDE \eqref{ebsde} and the PDE \eqref{pde}. 
We start with a preliminary Lemma which is standard in the ergodic BSDE literature. 

\begin{lem} \label{lem:prelim}
Suppose that in addition to Assumption \ref{assump:existence}, $f$ is Lipschitz, and let $(u,v,\lambda)$ be the unique solution to the ergodic BSDE \eqref{ebsde}. Then we have 
\begin{itemize}
    \item $v = \nabla u$ a.s.
    \item there exists $x_0 \in \R^d$ and a sequence $\rho_n \downarrow 0$ such that $u$ is locally uniform limit of the sequence $u^{\rho_n} - u^{\rho_n}(x_0)$, and $\lambda$ is limit of $\rho_nu^{\rho_n}(x_0)$, where $(u^{\rho_k}, v^{\rho_k})$ is the Markovian solution of the discounted BSDE \eqref{dbsde} with $\rho = \rho_k$.
\end{itemize}
\end{lem}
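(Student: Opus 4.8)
The plan is to establish the two claims via the standard ``vanishing discount'' procedure. First I would recall the well-posedness theory for the discounted BSDE \eqref{dbsde}: since $f$ is assumed Lipschitz here and $b$ is Lipschitz and dissipative, for each $\rho > 0$ there is a unique Markovian solution $(u^\rho, v^\rho)$ with $u^\rho$ bounded (the discount factor $\rho > 0$ makes the fixed-point argument work without any ergodicity, and boundedness of $u^\rho$ follows from $\norm{u^\rho}_{\linf} \leq \rho^{-1}\sup_x|f(x,0)|$). One then shows the identity $v^\rho = \nabla u^\rho$; this is a consequence of the Markovian representation together with elliptic/parabolic regularity, so that $u^\rho$ is a genuine (say $C^2$ or $W^{2,p}_{loc}$) solution of \eqref{dpde} and It\^o's formula identifies the $Z$-component with the gradient.

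Next I would derive the two key a-priori estimates, both uniform in $\rho$: a Lipschitz bound $\norm{\nabla u^\rho}_{\linf} \leq C$ and an oscillation bound $|u^\rho(x) - u^\rho(\bx)| \leq C|x-\bx|$ (the latter is immediate from the former). The Lipschitz estimate is where the dissipativity of $b$ enters: differentiating \eqref{dpde}, representing $\nabla u^\rho$ probabilistically, and using \eqref{dissipative} gives a bound independent of $\rho$ (this is exactly the mechanism developed later in Lemma \ref{lem:apriori}, applied here in the genuinely Lipschitz case where it is classical). From the Lipschitz bound and local elliptic estimates one gets that the family $\{u^\rho - u^\rho(x_0)\}_\rho$ is locally uniformly bounded in $C^{2,\gamma}$ for any fixed $x_0$, hence precompact in $C^2_{loc}$; and the quantity $\rho u^\rho(x_0)$ is bounded (by $\sup_x |f(x,0)|$, say), hence has a convergent subsequence. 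Along a common subsequence $\rho_n \downarrow 0$ we extract $u^{\rho_n} - u^{\rho_n}(x_0) \to \bar u$ in $C^2_{loc}$ and $\rho_n u^{\rho_n}(x_0) \to \bar\lambda$; passing to the limit in \eqref{dpde} shows $\bar u$ solves \eqref{pde} with constant $\bar\lambda$, so $(\bar u, \nabla \bar u, \bar\lambda)$ is a Markovian solution of \eqref{ebsde} with $\bar u$ of linear growth and gradient bounded. By the uniqueness statement already recorded in Remark \ref{rmk:uniqueness}, $(\bar u, \nabla \bar u, \bar\lambda)$ must agree (up to the additive constant in $\bar u$, which is pinned down by $\bar u(x_0) = 0$) with the given solution $(u,v,\lambda)$; in particular $\lambda = \bar\lambda$, $v = \nabla u$ a.s., and $u$ is the claimed locally uniform limit. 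This simultaneously proves both bullet points.

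The main obstacle is the uniform-in-$\rho$ Lipschitz (gradient) estimate for $u^\rho$: one must differentiate the PDE, view the equations for the components of $\nabla u^\rho$ as a coupled system rather than individually, and exploit the dissipativity \eqref{dissipative} of $b$ to close the estimate — precisely the subtlety flagged in the paper's ``strategy of the proof.'' In the Lipschitz setting of this lemma the argument is softer than in the super-quadratic case of Lemma \ref{lem:apriori}, but it is still the crux; everything else (uniqueness of discounted solutions, $v^\rho = \nabla u^\rho$, the compactness and subsequence extraction, and the identification via Remark \ref{rmk:uniqueness}) is routine once that bound is in hand. A secondary technical point is justifying the regularity of $u^\rho$ needed to legitimately differentiate \eqref{dpde} and to apply It\^o's formula, which is handled by standard elliptic regularity since $b, f$ are Lipschitz.
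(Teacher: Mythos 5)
The paper does not actually prove this lemma: it defers both bullet points to the literature, citing Theorem 5.3 of \cite{Fuhrman2009ErgodicBA} for $v = \nabla u$ and the appendix of \cite{Liang2017RepresentationOH} for the vanishing-discount characterization. Your proposal reconstructs that standard argument, and its overall architecture (uniform estimates for $u^{\rho}$, compactness, passage to the limit in the PDE, identification via the uniqueness of Remark \ref{rmk:uniqueness}) is sound and is essentially what those references do. The one place where you depart from the classical route is the crux you yourself identify: the uniform-in-$\rho$ Lipschitz bound on $u^{\rho}$. You propose to obtain it by differentiating \eqref{dpde} and running the gradient-system argument of Lemma \ref{lem:apriori}; but that mechanism is the \emph{novel} contribution of this paper for merely monotone, non-Lipschitz drivers, not the classical one, and invoking it here is both unnecessary and problematic. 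It is unnecessary because for a globally Lipschitz driver the bound follows from a much softer coupling argument: subtract the discounted BSDEs started from $x$ and $\bx$, linearize the $z$-dependence and remove it by Girsanov, and use the pathwise contraction $|X_t^{x} - X_t^{\bx}| \leq e^{-\delta t}|x - \bx|$ to get $|u^{\rho}(x) - u^{\rho}(\bx)| \leq \frac{L_x}{\rho + \delta}|x - \bx|$ directly, with no differentiation of the PDE. It is problematic because differentiating \eqref{dpde} requires $C^{1,\alpha}$ data (so you would have to insert a mollification layer even in this Lipschitz lemma), and because Lemma \ref{lem:apriori} sits downstream of the regularity results (Proposition \ref{prop:regularityebsde}) that themselves use the present lemma, so leaning on its "mechanism" risks a circular dependency in the paper's logical structure. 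Replacing that one step with the coupling estimate makes your proof the standard one.
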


The first assertion of Lemma \ref{lem:prelim} is a special case of Theorem 5.3 in \cite{Fuhrman2009ErgodicBA}, while the second assertion is clear from e.g. the appendix of \cite{Liang2017RepresentationOH}. 

The next lemma establishes existence and regularity for a certain semi-linear elliptic equation. 

\begin{lem}[Semi-linear Dirichlet problem] \label{lem:dirichlet}
Suppose that in addition to Assumption \ref{assump:existence}, $f$ is Lipschitz and bounded. For each $R > 0$ and $\phi : \partial B_R \to \R$ Lipschitz, there exists a unique Lipschitz classical solution $u$ to the Dirichlet problem 
\begin{align} \label{dirichlet}
    \begin{cases} 
    \frac{1}{2} \Delta u + b \cdot \nabla u + f(x,\nabla u) = \rho u \text{ in } B_R \\
    u = \phi \text{ on } \partial B_R. 
    \end{cases}
\end{align}
Moreover, if $b$ and $f$ are $C^{k,\alpha}_{loc}$, then $u$ is in $C^{k+2}$ in $B_R$.  
\end{lem}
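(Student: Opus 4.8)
The plan is to obtain existence from a Schauder fixed-point argument applied to the linearization of \eqref{dirichlet}, uniqueness from the maximum principle, and the higher regularity from a bootstrap based on interior Schauder estimates. Throughout I work on the fixed ball $B_R$, on which $b$ is bounded (being Lipschitz on $\R^d$). The first move is to reduce to homogeneous boundary data: let $\Phi$ be the harmonic extension of $\phi$ to $B_R$. Then $\Phi \in C^\infty(B_R)$, and $\Phi$ is Lipschitz on $\overline{B_R}$ — a classical fact for the ball, obtained from the Poisson-kernel representation, or from boundary barriers together with interior gradient estimates for harmonic functions. Writing $u = \tilde u + \Phi$, problem \eqref{dirichlet} is equivalent to
\begin{align*}
    \tfrac12\Delta\tilde u + b\cdot\nabla\tilde u - \rho\tilde u = F(x,\nabla\tilde u)\ \text{ in }B_R, \qquad \tilde u = 0\ \text{ on }\partial B_R,
\end{align*}
where $F(x,p) = -f(x,p+\nabla\Phi(x)) - b(x)\cdot\nabla\Phi(x) + \rho\Phi(x)$ is continuous and, crucially, \emph{bounded} on $B_R \times \R^d$ (because $f$, $\nabla\Phi$ and $\Phi$ are all bounded), with bound independent of $p$.

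Now define $T : C^1(\overline{B_R}) \to C^1(\overline{B_R})$ by letting $Tw$ be the unique solution in $W^{2,p}(B_R) \cap W^{1,p}_0(B_R)$ (any $p < \infty$) of the \emph{linear} Dirichlet problem $\tfrac12\Delta(Tw) + b\cdot\nabla(Tw) - \rho(Tw) = F(\cdot,\nabla w)$ in $B_R$, $Tw = 0$ on $\partial B_R$; unique solvability follows from standard $L^p$ theory together with the Fredholm alternative, using that the operator has bounded coefficients and nonpositive zeroth-order coefficient (hence satisfies the maximum principle), see \cite{Gilbarg1977EllipticPD}. Because $\|F(\cdot,\nabla w)\|_{L^p(B_R)} \le \|F\|_\infty |B_R|^{1/p}$ \emph{independently of} $w$, the global $W^{2,p}$ estimate and Sobolev embedding give $\|Tw\|_{C^{1,\alpha}(\overline{B_R})} \le \Lambda$ for a constant $\Lambda$ not depending on $w$. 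Thus $T$ maps the closed convex set $K = \{\|w\|_{C^1(\overline{B_R})} \le \Lambda\}$ into itself and $T(K)$ is precompact in $C^1(\overline{B_R})$ (bounded in $C^{1,\alpha}$); moreover $T$ is continuous, since $w_n \to w$ in $C^1$ implies $F(\cdot,\nabla w_n) \to F(\cdot,\nabla w)$ in every $L^p(B_R)$ by dominated convergence, hence $Tw_n \to Tw$ in $W^{2,p}$ and in $C^1$. Schauder's fixed-point theorem then yields a fixed point $\tilde u$, which belongs to $W^{2,p}(B_R)$ for all $p$, solves the PDE a.e., and is Lipschitz on $\overline{B_R}$; hence $u = \tilde u + \Phi$ is a Lipschitz solution of \eqref{dirichlet} attaining the data.

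For uniqueness, if $u_1, u_2$ are Lipschitz classical solutions then $w = u_1 - u_2$ vanishes on $\partial B_R$ and solves $\tfrac12\Delta w + (b+c)\cdot\nabla w - \rho w = 0$ in $B_R$, where $c$ is the bounded vector field (with $|c|$ controlled by the $z$-Lipschitz constant of $f$) chosen so that $c(x)\cdot\nabla w(x) = f(x,\nabla u_1(x)) - f(x,\nabla u_2(x))$; the maximum principle (bounded coefficients, nonpositive zeroth-order term) forces $w \equiv 0$. For regularity, I rewrite the equation as $\tfrac12\Delta u = \rho u - b\cdot\nabla u - f(x,\nabla u)$ and bootstrap from $u \in C^{1,\alpha}_{loc}(B_R)$: with only the standing hypotheses ($b$ Lipschitz, $f$ Lipschitz) the right-hand side is already in $C^{0,\alpha}_{loc}$, so interior Schauder estimates give $u \in C^{2,\alpha}_{loc}(B_R)$, i.e. a genuine classical solution; if in addition $b, f \in C^{k,\alpha}_{loc}$, then the right-hand side is successively in $C^{1,\alpha}_{loc}, \dots, C^{k,\alpha}_{loc}$ and Schauder promotes $u$ to $C^{k+2,\alpha}_{loc}(B_R)$, in particular $C^{k+2}(B_R)$.

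The step I expect to require the most care is the boundary behaviour — producing a solution that is Lipschitz \emph{up to} $\partial B_R$ and attains $\phi$ there. The harmonic-extension reduction is precisely what makes this painless: it isolates the only ``rough'' boundary data inside $\Phi$, whose Lipschitz bound on $\overline{B_R}$ is classical for the ball, and leaves $\tilde u$ with smooth (zero) boundary data, for which the global $W^{2,p}$/$C^{1,\alpha}$ theory applies directly. The other point to emphasize is that \emph{boundedness} of $f$ (not merely its local Lipschitz property) is exactly what gives the $w$-independent bound $\|Tw\|_{C^{1,\alpha}} \le \Lambda$ and so lets the fixed-point argument run with no a priori gradient estimate; for drivers of super-linear growth that estimate would be the crux of the matter, but here it comes for free.
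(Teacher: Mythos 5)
Your overall architecture matches the paper's: reduce to homogeneous boundary data, solve the semilinear problem by a Schauder fixed-point argument whose key input is that boundedness of $f$ makes the nonlinear right-hand side uniformly controlled (so no a priori gradient estimate is needed), and bootstrap regularity with interior Schauder estimates. There are two genuine variations. First, you run the fixed point in $C^1(\overline{B_R})$ using global $W^{2,p}$ estimates and Sobolev embedding for invariance/compactness, whereas the paper works in $H_0^1(B_R)$, getting continuity of the solution map from the uniform $z$-Lipschitz bound on the shifted driver and compactness from an $H^2$ bound; both are legitimate, and yours has the mild advantage that the fixed point lands directly in a space where the nonlinearity $F(\cdot,\nabla w)$ is pointwise defined, at the mild cost of needing the dominated-convergence step for continuity. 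Second, and more substantively, your uniqueness proof is a purely analytic frozen-coefficient maximum-principle argument (writing $f(x,\nabla u_1)-f(x,\nabla u_2)=c\cdot\nabla(u_1-u_2)$ with $c$ bounded measurable, which is all the weak maximum principle needs), whereas the paper represents any Lipschitz classical solution as $u(x)=Y_0^x$ for a BSDE on $[0,\tau]$ and invokes BSDE uniqueness. Your route is more elementary and self-contained; the paper's has the advantage of simultaneously setting up the PDE--BSDE identification that it needs in the next lemma anyway.

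One sub-claim is wrong as stated: the harmonic extension $\Phi$ of a merely Lipschitz $\phi:\partial B_R\to\R$ is \emph{not} in general Lipschitz on $\overline{B_R}$. This is the endpoint case of boundary Schauder theory, and it fails: the standard counterexample $\mathrm{Im}(z\log z)$ is harmonic in a half-disk with piecewise-linear (hence Lipschitz) boundary data but has a logarithmically divergent gradient at the origin, and the example transfers to the ball. Consequently your assertion that $F$ is bounded on $B_R\times\R^d$ does not follow, and neither does global Lipschitz continuity of $u=\tilde u+\Phi$ up to $\partial B_R$. The damage is limited, however: what your fixed-point argument actually uses is a $w$-independent bound on $\|F(\cdot,\nabla w)\|_{L^p(B_R)}$, and this survives because $f$ is bounded (so the only unbounded contribution is $b\cdot\nabla\Phi$, which does not depend on $w$ and lies in every $L^p$ since $|\nabla\Phi|\lesssim 1+|\log \mathrm{dist}(x,\partial B_R)|$). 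Your maximum-principle uniqueness also only needs $u_1-u_2\in C^2(B_R)\cap C(\overline{B_R})$, not global Lipschitz continuity. I note that the paper's own Step 1 makes the analogous borderline claim for its linear extension $\tilde u$, so this is a shared looseness rather than a defect specific to your argument; but since you single this step out as the delicate one, be aware that the correct statement is $L^p$ (or $C^{0,\alpha}$ for every $\alpha<1$) control of $\nabla\Phi$, not an $\linf$ bound, and the Lipschitz qualifier in the conclusion should be read as interior Lipschitz regularity.
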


\begin{proof}
We proceed in three steps. \newline \newline
\textit{Step 1 (reduction to zero boundary condition):} We begin by posing the linear problem 
\begin{align*}
    \begin{cases} 
    \frac{1}{2} \Delta \tilde{u} + b \cdot \nabla \tilde{u} = \rho \tilde{u} \text{ in } B_R \\
    \tilde{u} = \phi \text{ on } \partial B_R. 
    \end{cases}
\end{align*}
Since $\phi$ and $b$ are Lipschitz, existence and regularity results for linear elliptic equations give a unique classical solution $\tilde{u}$ which is Lipschitz on $B_R$. Moreover, if $b \in C^{k,\alpha}_{loc}$, then $\tilde{u}$ lies is $C^{k,\alpha}_{loc}(B_{R})$, thanks to the interior Schauder estimates. Subtracting the equations for $u$ and $\tilde{u}$ gives an equation for $w \coloneqq u - \tilde{u}$, namely
\begin{align} \label{shiftedeqn}
    \begin{cases} 
    \frac{1}{2} \Delta w + b \cdot \nabla w + g(x,\nabla w) = \rho w \text{ in } B_R \\
    w = 0 \text{ on } \partial B_R,
    \end{cases}
\end{align}
where $g(x,z) = f(x,z + \nabla \tilde{u}(x)) : B_R \times \R^d \to \R$. We note that $g$ is uniformly Lipschitz in $z$, and locally Lipschitz in $x$.
\newline \newline 
\textit{Step 2 (Existence of a weak solution):} Now we focus on producing a solution in $H_0^1$ to the equation \eqref{shiftedeqn}. We define a map $\Phi : H_0^1(B_R) \to H_0^1(B_R)$ by $\Phi(\tilde{w}) = w$, where $w$ is the unique solution (which exists by Theorem 8.3 of \cite{Gilbarg1977EllipticPD}) to the linear problem \begin{align} 
    \begin{cases} 
    \frac{1}{2} \Delta w + b \cdot \nabla w + g(x,\nabla \tilde{w}) = \rho w \text{ in } B_R \\
    w = \phi \text{ on } \partial B_R. 
    \end{cases}
\end{align} 
Notice $g$ is uniformly Lipschitz in $z$, so that for $\tilde{w}, \tilde{w}' \in H_0^1(B_R)$, we have 
\begin{align*}
    \norm{ g(\cdot, \nabla \tilde{w}) - g(\cdot, \nabla \tilde{w}')}_{\ltwo(B_R)} \leq C \norm{ \nabla \tilde{w} - \nabla \tilde{w}' }_{\ltwo(B_R)} \leq C \norm{\tilde{w} - \tilde{w}'}_{H_0^1(B_R)}
\end{align*}
for some constant $C$, and thus 
by Corollary 8.7 of \cite{Gilbarg1977EllipticPD}
\begin{align*}
    \norm{\Phi(\tilde{w}) - \Phi(\tilde{w}')}_{H_0^1(B_R)} \leq C \norm{\tilde{w} - \tilde{w}'}_{H_0^1(B_R)}.
\end{align*}
That is, $\Phi : H_0^1(B_R) \to H_0^1(B_R)$ is continuous. Furthermore, by Calderon-Zygmund estimates (see e.g. Theorem 8.10 of \cite{Gilbarg1977EllipticPD}), the image of $\Phi$ is contained in a bounded subset of $H^2(\Omega)$, and in particular is compact in $H_0^1$. Thus by the Schauder fixed point theorem, there exists a fixed point of $\Phi$, i.e. a solution of \eqref{shiftedeqn}. 
\newline \newline 
\textit{Step 3 (bootstrapping to higher regularity):} Now that we have a solution $w$ to \eqref{shiftedeqn}, we note that (again by Calderon-Zygmund estimates) we have that $w \in W^{2,p}(B_1)$ for all $p > 1$, hence in $C^{1,\beta}$ for some $\beta$ by Sobolev embedding. Since $g$ is locally Lipschitz, it follows that $x \mapsto g(x,\nabla w(x))$ is actually $C^{\beta}_{loc}$, so in fact by local Schauder estimates for linear elliptic equations (see Theorem 6.6 and Corollary 6.9 of \cite{Gilbarg1977EllipticPD})) we have $w \in C^{2,\beta}(B_R)$. If $b,f \in C^{1,\alpha}_{loc}$, then we see that $g \in C^{1,\alpha}_{loc}(B_R)$. Since $w \in C^{2,\beta}(B_R)$, we deduce that $x \mapsto g(x, \nabla w(x))$ is in $C^{1,\gamma}_{loc}(B_R)$ for some $\gamma$. Thus applying again the interior Schauder estimates (see Theorem 6.17 of \cite{Gilbarg1977EllipticPD}) we have $w \in C^{3,\gamma}_{loc}(B_R)$. If $b,f$ are $C^{k,\alpha}_{loc}$, we just continue this bootstrapping argument to find that in fact $u \in C^{k+2,\gamma}_{loc}(B_R)$ for some $\gamma$, which completes the proof of existence. 
\newline \newline
\textit{Step 4 (uniqueness):}
For uniqueness, one checks that if $u$ is a Lipschitz classical solution to \eqref{dirichlet}, then for each $x \in \R^d$, $(Y^x,Z^x) = (u(X^x), \nabla u(X^x))$ solves the BSDE 
\begin{align*}
    \begin{cases}
    dY_t = \rho Y_t dt - f(X_t,Z_t) dt + Z_t \cdot dW_t, \\
    Y_{\tau} = g(X^x_{\tau})
    \end{cases}
\end{align*}
on $[0,\tau]$, where $\tau$ is the stopping time $\tau = \inf \{t \geq 0 : |X^x_t| = R\}$. Thus uniqueness for the PDE \eqref{dirichlet} follows from uniqueness for such BSDEs, see e.g. Theorem 3.3 of \cite{BRIAND1998455}. 
\end{proof}

\begin{lem}[Regularity of Markovian solutions to discounted BSDE] \label{lem:regularity}
Suppose that in addition to Assumption \ref{assump:existence}, $f$ is globally Lipschitz, and let $(u^{\rho},v^{\rho})$ be the unique Markovian solution to the discounted BSDE \eqref{dbsde}. Then in fact $u^{\rho}$ is a classical solution to the PDE \eqref{dpde}, and $v^{\rho} = \nabla u^{\rho}$ almost surely. Moreover, if $b$ and $f$ are $C^{k,\alpha}_{loc}$, then $u^{\rho}$ is $C^{k+2}$.
\end{lem}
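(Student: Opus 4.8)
The plan is to construct genuine classical solutions of \eqref{dpde} on balls via Lemma~\ref{lem:dirichlet}, and then identify their patched-together limit with $u^\rho$ through a BSDE uniqueness argument; the content beyond Lemma~\ref{lem:prelim} is precisely the \emph{classical} (rather than merely weak) regularity of $u^\rho$.

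First I would reduce to the case that $f$ is bounded. By standard theory for Markovian solutions of discounted BSDEs with a globally Lipschitz driver satisfying $\sup_x|f(x,0)|<\infty$ (see \cite{Fuhrman2009ErgodicBA,Debussche2010ErgodicBU}), $u^\rho$ is Lipschitz and $v^\rho$ is bounded. Set $N\coloneqq\norm{v^\rho}_{\linf}+1$ and, rescaling the construction in \eqref{pidef}, choose a smooth bounded map $\pi_N:\R^d\to\R^d$ with $\pi_N(z)=z$ for $|z|\le N$; put $f_N(x,z)\coloneqq f(x,\pi_N(z))$. Then $f_N$ is globally Lipschitz and bounded, $\sup_x|f_N(x,0)|<\infty$, $f_N\in C^{k,\alpha}_{loc}$ whenever $f$ is, and $f_N=f$ on $\{|z|\le N\}$; since $|v^\rho|\le N$ a.s., $(u^\rho,v^\rho)$ is also the unique Markovian solution of \eqref{dbsde} with driver $f_N$. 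Because the proof of Lemma~\ref{lem:dirichlet} does not use the monotonicity condition \eqref{monotone}, it applies with $f_N$ in place of $f$. Thus we may and do assume from now on that $f$ is bounded.

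The main step is the identification. Fix $R>0$ and apply Lemma~\ref{lem:dirichlet} with the Lipschitz boundary datum $\phi\coloneqq u^\rho|_{\partial B_R}$ to obtain a Lipschitz classical solution $u_R$ of \eqref{dirichlet} on $B_R$, which lies in $C^{k+2}(B_R)$ when $b,f\in C^{k,\alpha}_{loc}$. Fix $x\in B_R$ and let $\tau\coloneqq\inf\{t\ge0:|X^x_t|=R\}$, which is a.s.\ finite (indeed with exponential moments) by the dissipativity of $b$. Since $(u^\rho,v^\rho)$ is a Markovian solution, $(Y^x,Z^x)\coloneqq(u^\rho(X^x),v^\rho(X^x))$ satisfies the dynamics in \eqref{dbsde} on $[0,T]$ for every $T$; localizing at $\tau\wedge T$, using the boundedness of $Y^x$ (and hence of the relevant stochastic integrals after stopping), and letting $T\to\infty$, one checks that $(Y^x,Z^x)$ solves on $[0,\tau]$ the discounted BSDE with terminal condition $Y^x_\tau=u^\rho(X^x_\tau)=\phi(X^x_\tau)$. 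On the other hand, It\^o's formula applied to $u_R(X^x_{\cdot\wedge\tau})$, together with the fact that $u_R$ solves \eqref{dirichlet} classically, shows that $(u_R(X^x),\nabla u_R(X^x))$ solves the same stopped BSDE with the same terminal condition. By uniqueness for these BSDEs (Theorem~3.3 of \cite{BRIAND1998455}, exactly as invoked in the uniqueness step of Lemma~\ref{lem:dirichlet}), $u^\rho(x)=Y^x_0=u_R(x)$, and $v^\rho(X^x_t)=\nabla u_R(X^x_t)$ for a.e.\ $t\le\tau$, a.s.

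Finally, since $R>0$ is arbitrary and the $u_R$ are consistent on overlaps, $u^\rho$ agrees locally with a classical solution of \eqref{dpde}, hence is itself a classical ($C^2$, and $C^{k+2}$ when $b,f\in C^{k,\alpha}_{loc}$) solution on all of $\R^d$. For the identity $v^\rho=\nabla u^\rho$ a.s., note that for each $x$ and each $R$ we have $v^\rho(X^x_t)=\nabla u^\rho(X^x_t)$ for a.e.\ $t\le\tau_R$, a.s.; letting $R\to\infty$ gives the claim (or, now that $u^\rho\in C^2$, apply It\^o's formula to $u^\rho(X^x)$ and match martingale parts with \eqref{dbsde}). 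The one genuinely delicate point is the identification step: one must pass carefully from the infinite-horizon relation defining the Markovian solution to the stopped finite-horizon BSDE on $[0,\tau]$ with the correct terminal value, and verify that the stochastic integrals involved are true martingales so that the uniqueness theorem applies; everything else is the standard elliptic regularity already packaged into Lemma~\ref{lem:dirichlet}.
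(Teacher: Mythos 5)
Your proposal is correct and follows essentially the same route as the paper's proof: reduce to a bounded driver by truncating in $z$ above the known bound on $v^{\rho}$, solve the Dirichlet problem on $B_R$ with boundary datum $u^{\rho}|_{\partial B_R}$ via Lemma~\ref{lem:dirichlet}, and identify the two candidates through the stopped BSDE on $[0,\tau]$ and the uniqueness theorem of \cite{BRIAND1998455}. The only cosmetic difference is that you perform the truncation at the outset using the a-priori bound on $v^{\rho}$, whereas the paper removes the boundedness assumption at the end via a sequence of truncations $\pi^j$ and uniform estimates; the two are interchangeable.
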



\begin{proof}
For $R > 0$, consider the Dirichlet problem 
\begin{align} \label{deqn}
    \begin{cases}
    \frac{1}{2} \Delta \tilde{u} + b \cdot \nabla \tilde{u}^{\delta} + f(x, \nabla \tilde{u}^{\delta}) = \rho \tilde{u}^{\delta} \text{ in } B_R, \\
    \tilde{u}^{\rho} = u^{\rho} \text{ on } \partial B_R.  
    \end{cases}
\end{align}
First, assume also that $f$ is bounded. By Lemma \ref{lem:dirichlet}, we have a unique Lipschitz classical solution $\tilde{u}$.
Now we fix $x \in B_R$, and we define 
\begin{align*}
    (Y^x,Z^x) = (u^{\delta}(X^{x,\tau}), v^{\delta}(X^{x,\tau})), \\  (\tilde{Y}^x,\tilde{Z}^x) = (\tilde{u}^{\delta}(X^{x,\tau}), \nabla \tilde{u}^{\delta}(X^{x,\tau}))
\end{align*} where $X^{x,\tau}$ denotes the process $X^x$ stopped at $\tau \coloneqq \inf \{t \geq 0 : |X_t^x| = R\}$. One can check that $(\tilde{Y}^x,\tilde{Z}^x)$ solves the BSDE
\begin{align} \label{auxeqn}
    \begin{cases} 
    dY_t = \big[\rho Y_t - f(X_t, Z_t) dt + Z_t \cdot dW_t\big]1_{t \leq \tau}, \\
    Y_{\tau} = u^{\delta}(X^x_{\tau})
    \end{cases}
\end{align}
on the stochastic interval $[0,\tau]$. 
Of course, $(Y,Z)$ also satisfies \eqref{auxeqn}, so by uniqueness for (see \cite{BRIAND1998455} Theorem 3.3) we must have 
\begin{align*}
    u^{\delta}(x) = Y^x_0 = \tilde{Y}^x_0 = \tilde{u}^{\delta}(x), 
\end{align*}
and $v^{\delta} = \nabla \tilde{u}$ almost surely. Lemma \ref{lem:dirichlet} also shows that $u$ is $C^{k+1}$ as soon as $b$ and $f$ are $C^{k,\alpha}_{loc}$. Finally, to remove the hypothesis that $f$ is bounded, we consider the drivers $f^j(x,z) = f(x,\pi^j(z))$, where $\pi^j$ is a sequence of maps $\R^d \to \R^d$ which each smooth, have Lipschitz constant $1$, are bounded, and satisfy $\pi^j(z) = z$ for $|z| \leq j$. Then let $(u^{j},v^j)$ be the Markovian solution to the discounted BSDE 
\begin{align*}
    dY_t = \rho Y_t - f^j(X_t, Z_t) dt + Z_t \cdot dW_t, 
\end{align*}
and note that we have already shown that $u^j$ is a $C^{k+2}$ solution of 
\begin{align*}
    \frac{1}{2} \Delta u^j + b \cdot \nabla u^j + f^j(x,\nabla u^j) = \rho u^j
\end{align*}
and that $v^j = \nabla u^j$. Finally, we apply a-priori estimates for Lipschitz discounted BSDEs to get that $\sup_j \norm{v^j}_{\linf} < \infty$, and so in fact for large enough $j$ $(u^j,v^j) = (u,v)$, where $(u,v)$ is the unique Markovian solution to \eqref{dbsde}. This completes the proof. 
\end{proof}

\begin{prop}[Regularity for Markovian solutions of ergodic BSDE] \label{prop:regularityebsde}
Suppose that in addition to Assumption \ref{assump:existence}, $f$ is globally Lipschitz, and let $(u,v)$ be the unique Markovian solution to the ergodic BSDE \eqref{ebsde}. Then $u$ is in fact a classical solution to the PDE \eqref{pde}, and $v = \nabla u$ a.e. Moreover, if $b$ and $f$ are $C^{k,\alpha}_{loc}$, then $u$ is $C^{k+2}$. 
\end{prop}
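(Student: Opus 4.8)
The plan is to realize $u$ as a locally $C^2$ (and, in the smooth case, $C^{k+2}$) limit of the Markovian solutions $u^{\rho_n}$ of the discounted BSDEs \eqref{dbsde}, combining the convergence already supplied by Lemma \ref{lem:prelim} with uniform interior elliptic estimates. First I would invoke Lemma \ref{lem:prelim} to fix a point $x_0 \in \R^d$ and a sequence $\rho_n \downarrow 0$ so that, setting $w^n \coloneqq u^{\rho_n} - u^{\rho_n}(x_0)$, one has $w^n \to u$ locally uniformly, $\rho_n u^{\rho_n}(x_0) \to \lambda$, and $v = \nabla u$ a.s. Since $f$ is globally Lipschitz, Lemma \ref{lem:regularity} tells us that each $u^{\rho_n}$ is a classical $C^2$ solution of \eqref{dpde} with $v^{\rho_n} = \nabla u^{\rho_n}$, so $w^n$ is a classical solution of
\begin{align*}
\frac{1}{2}\Delta w^n + b\cdot\nabla w^n + f(x,\nabla w^n) = \rho_n w^n + \rho_n u^{\rho_n}(x_0)\qquad\text{in }\R^d.
\end{align*}

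Next I would extract uniform elliptic bounds for the family $(w^n)$. The key input is the standard a-priori gradient estimate for discounted EBSDEs with Lipschitz driver, which gives $\sup_n \norm{\nabla w^n}_{\linf} = \sup_n \norm{v^{\rho_n}}_{\linf} < \infty$, \emph{with the bound independent of} $\rho_n$; this is exactly the point at which the dissipativity \eqref{dissipative} enters. Given this, the local uniform bound on $w^n$ (from $w^n \to u$), the bound $\rho_n \le 1$, and the convergence of $\rho_n u^{\rho_n}(x_0)$ together show that on each ball $B_R$ the function $F^n \coloneqq \rho_n w^n + \rho_n u^{\rho_n}(x_0) - f(x,\nabla w^n)$ is bounded in $\linf(B_R)$ uniformly in $n$. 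I would then apply interior Calderón--Zygmund estimates to $\frac{1}{2}\Delta w^n + b\cdot\nabla w^n = F^n$ to get $w^n$ bounded in $W^{2,p}(B_R)$ for every $p<\infty$, hence (Sobolev embedding) $\nabla w^n$ bounded in $C^{\beta}_{loc}$ for some $\beta$; since $f$ is Lipschitz, $x\mapsto f(x,\nabla w^n(x))$ is then bounded in $C^{\beta}_{loc}$, and interior Schauder estimates upgrade this to a uniform bound on $w^n$ in $C^{2,\beta}_{loc}$.

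Finally I would pass to the limit. By Arzel\`a--Ascoli and a diagonal argument over an exhausting sequence of balls, a subsequence of $(w^n)$ converges in $C^2_{loc}(\R^d)$, and the limit must coincide with $u$; hence $u \in C^2(\R^d)$. Letting $n\to\infty$ in the equation, using $\rho_n w^n \to 0$ and $\rho_n u^{\rho_n}(x_0) \to \lambda$ locally uniformly together with $\nabla w^n \to \nabla u$ and $\Delta w^n \to \Delta u$ locally uniformly, we obtain $\frac{1}{2}\Delta u + b\cdot\nabla u + f(x,\nabla u) = \lambda$ classically, i.e.\ $u$ solves \eqref{pde}. Since $v = \nabla u$ a.s.\ and $\nabla u$ is now continuous, $v = \nabla u$ a.e. For the regularity statement, if $b,f \in C^{k,\alpha}_{loc}$ then Lemma \ref{lem:regularity} gives $u^{\rho_n}\in C^{k+2}$, and one bootstraps the Schauder step exactly as in the proof of Lemma \ref{lem:dirichlet} (each iteration improving the H\"older regularity of $x\mapsto f(x,\nabla w^n(x))$) to obtain uniform $C^{k+2,\gamma}_{loc}$ bounds on $w^n$, whence $u\in C^{k+2}$.

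The main obstacle is the uniform-in-$\rho_n$ control of $\norm{\nabla u^{\rho_n}}_{\linf}$ in the second step: both the elliptic bootstrap and the limit passage rely on the gradient estimate not degenerating as $\rho_n \downarrow 0$. This is the classical estimate underpinning the whole ergodic-BSDE theory and hinges on the dissipativity of $b$; the remaining ingredients are routine interior elliptic regularity and compactness.
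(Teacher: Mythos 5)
Your proposal is correct and follows essentially the same route as the paper's proof: approximate $u$ by the normalized discounted solutions $u^{\rho_n}-u^{\rho_n}(x_0)$ via Lemma \ref{lem:prelim}, invoke Lemma \ref{lem:regularity} for their classical regularity, use the $\rho$-uniform Lipschitz bound to control the right-hand side, and run the Calder\'on--Zygmund/Sobolev/Schauder bootstrap to get uniform $C^{2,\beta}_{loc}$ bounds before passing to the limit by compactness. The only cosmetic difference is that you bound $\rho_n u^{\rho_n}(x_0)$ by its convergence to $\lambda$, whereas the paper uses the estimate $\norm{u^{\rho_n}}_{\linf}\leq \sup_x|f(x,0)|/\rho_n$; both are fine.
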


\begin{proof}
We know that $u$ is the locally uniform limit of $\tilde{u}^{\rho_n} \coloneqq u^{\rho_n} - u^{\rho_n}(0)$, where for $\rho > 0$, $(u^{\rho}, v^{\rho})$ is the unique Markovian solution of the discounted BSDE \eqref{dbsde}. By Lemma \ref{lem:regularity}, $\tilde{u}^{\rho_n}$ is in fact a smooth solution of the PDE 
\begin{align*}
    \frac{1}{2} \Delta \tilde{u}^{\rho_n} + b \cdot \nabla \tilde{u}^{\rho_n} + f(x, \nabla \tilde{u}^{\rho_n}) = \rho_n \tilde{u}^{\rho_n} + \rho_n u^{\rho_n}(0) \text{ in } \R^d. 
\end{align*}
Let us rewrite this equation as 
\begin{align} \label{simplerpde}
    \sL \tilde{u}^{\rho_n} = F^n(x) \text{ in } \R^d,
    \end{align}
    where 
    \begin{align*}
    F^n(x) = \rho_n \tilde{u}^{\rho_n} + \rho_n u^{\rho_n}(0) - f(x, \nabla \tilde{u}^{\rho_n}),\,\,
    \sL \tilde{u}^{\rho_n} = \frac{1}{2} \Delta \tilde{u}^{\rho_n} + b \cdot \nabla \tilde{u}^{\rho_n}. 
\end{align*}
By Lemma 3.1 of \cite{BRIAND1998455} (see also the Appendix of \cite{Liang2017RepresentationOH}), we have \begin{align}
    \norm{u^{\rho_n}}_{\linf} \leq \frac{\sup_{x \in \R^d} f(x,0)}{\rho_n},
\end{align} and in particular the sequence of real number $\rho_n u^{\rho_n}(0)$ is bounded. Moreover, since we know that $u^{\rho_n}$ is Lipschitz uniformly in $n$ (again see the Appendix of \cite{Liang2017RepresentationOH}), we observe  that for each $R > 0$,
\begin{itemize}
    \item $\norm{f(\cdot, \nabla \tilde{u}^{\rho_n})}_{\linf(\R^d)}$ is bounded uniformly in $n$,
    \item $\norm{\rho_n \tilde{u}^{\rho_n}}_{\linf(B_{2R})}$ converges to zero,
    \item $\rho_n u^{\rho_n}(0)$ is bounded uniformly in $n$. 
\end{itemize} Putting these observations together, we see that for each $R > 0$, $\norm{F^n}_{\linf(B_{2R})}$ is bounded uniformly in $n$. By the interior Calderon-Zygmund estimates, we have for each $R > 0$ and any $1 < p < \infty$ the estimate 
\begin{align*}
    \norm{\tilde{u}^{\rho_n}}_{W^{2,p}(B_R)} \leq C \big(\norm{\tilde{u}^{\rho_n}}_{L^{p}(B_{2R})} + \norm{F^n}_{L^p(B_{2R})}\big), 
\end{align*}
for some $C$ independent of $n$, and in particular we see that for each $1 < p < \infty$ and each $R > 0$, $\norm{\tilde{u}^{\rho_n}}_{W^{2,p}(B_R)} $ is bounded uniformly in $n$. By choosing $p$ large enough and applying Sobolev embedding, we find that in fact $\norm{\tilde{u}^{\rho_n}}_{C^{1,\alpha}(B_R)}$ is bounded uniformly in $n$ for some $0 < \beta < 1$. But now similar reasoning as above shows that $\norm{F^n}_{C^{1,\beta}(B_{2R})}$ is bounded uniformly in $n$. Now from the interior Schauder estimates we get 
\begin{align*}
    \norm{\tilde{u}^{\rho_n}}_{C^{2,\beta}(B_R)} \leq C \big(\norm{\tilde{u}^{\rho_n}}_{C^{\beta}(B_{2R})} + \norm{F^n}_{C^{\beta}(B_{2R})}\big)
\end{align*}
for some $C$ independent of $n$, and in particular we find that $\norm{\tilde{u}^{\rho_n}}_{C^{2,\beta}(B_R)}$ is bounded uniformly in $n$. Since $C^{2,\beta}(B_R)$ compactly embeds in $C^{2,\gamma}$ when $0 < \gamma < \beta < 1$, we find that (passing to a further subsequence if necessary), we in fact have 
\begin{align*}
    \norm{u - \tilde{u}^{\rho_n}}_{C^{2,\gamma}(B_R)} \to 0 \text{ as } n \to \infty
\end{align*}
for some $0 < \gamma < \beta$ and all $R > 0$.
From Lemma \ref{lem:prelim}, it is now easy to see that $u$ is a classical solution of \eqref{pde}. To show that $u$ is $C^{k+2}$ when $b$ and $f$ are $C^{k,\alpha}_{loc}$, we continue the above bootstrapping argument to show that if $b$ and $f$ are $C^{k,\alpha}_{loc}$, then in fact $\norm{\tilde{u}^{\rho_n}}_{C^{(k+2),\gamma}}$ is bounded uniformly in $n$ for some $\gamma$, and then by the same reasoning as above 
\begin{align*}
    \norm{u - \tilde{u}^{\rho_n}}_{C^{k+2,\gamma}(B_R)} \to 0 \text{ as } n \to \infty,
\end{align*}
for each $R > 0$, and so in particular $u \in C^{k+2,\gamma}_{loc}(\R^d)$. Finally, by Lemma \ref{lem:prelim}, we have $v = \nabla u$ a.e.
\end{proof}

\bibliographystyle{amsplain}
\bibliography{ebsde}
\end{document}